\documentclass[11pt]{article}
\usepackage{graphicx}%
\usepackage{hyperref}%
\usepackage{comment}%
\usepackage{multirow}%
\usepackage{amsmath,amssymb,amsfonts}%
\usepackage{amsthm}%
\usepackage{mathrsfs}%
\usepackage[title]{appendix}%
\usepackage{xcolor}%
\usepackage{textcomp}%
\usepackage{manyfoot}%
\usepackage{booktabs}%
\usepackage{subcaption}
\usepackage{listings}%
\usepackage{algorithm}
\usepackage{algorithmic}
\newtheorem{theorem}{Theorem}
\newtheorem{corollary}{Corollary}

\newtheorem{remark}{Remark}

\newtheorem{assumption}{Assumption}
\newtheorem{definition}{Definition}

\raggedbottom

\begin{document}

\title{Riemannian Inexact Gradient Descent for Quadratic Discrimination}

\author{
  Uday Talwar \\
  School of Mathematical Sciences, University of Arizona \\
  \texttt{udaytalwar@arizona.edu}
  \and
  Meredith K. Kupinski \\
  Wyant College of Optical Sciences, University of Arizona \\
  \texttt{meredithkupinski@arizona.edu}
  \and
  Afrooz Jalilzadeh \\
  Department of Systems and Industrial Engineering, \\ University of Arizona \\
  \texttt{afrooz@arizona.edu}
}

\maketitle
\begin{abstract}
    We propose an inexact optimization algorithm on Riemannian manifolds, motivated by quadratic discrimination tasks in high-dimensional, low-sample-size (HDLSS) imaging settings. In such applications, gradient evaluations are often biased due to limited sample sizes. To address this, we introduce a novel Riemannian optimization algorithm that is robust to inexact gradient information and prove an $\mathcal O(1/K)$ convergence rate under standard assumptions. We also present a line search variant that requires access to function values but not exact gradients, maintaining the same convergence rate and ensuring sufficient descent. The algorithm is tailored to the Grassmann manifold by leveraging its geometric structure, and its convergence rate is validated numerically. A simulation of heteroscedastic images shows that when bias is introduced into the problem, both intentionally and through estimation of the covariance matrix, the detection performance of the algorithm solution is comparable to when true gradients are used in the optimization. The optimal subspace learned via the algorithm encodes interpretable patterns and shows qualitative similarity to known optimal solutions. By ensuring robust convergence and interpretability, our algorithm offers a tool for manifold-based dimensionality reduction in the presence of inexact gradients due to sample statistics.
\end{abstract}

\section{Introduction}\label{sec1}

Optimal binary classification requires the likelihood ratio of class-conditional probability density functions as a test statistic. Modern imaging systems produce high-dimensional data—often millions of elements per image—making it computationally expensive or infeasible to estimate optimal test statistics, especially when training data is limited. A practical alternative is to use test statistics that are linear or quadratic in the data, both of which rely on estimating the mean and covariance for each class from the training set. However, in high-dimensional, low-sample-size (HDLSS) settings, these estimates are often unreliable due to insufficient training samples.


One strategy to address this challenge is dimensionality reduction via a linear transformation. This raises the question: which transformation matrix best projects the data into a lower-dimensional space while preserving class-discriminative information? We consider a linear transformation of the form
\begin{align*}
    v = Tg,
\end{align*}
where $T$ is an $p \times n$ ($p << n$) matrix representing a point on a Grassmann manifold ($\mathrm{Gr}(p,n)$)
- a particular type of Riemannian manifold, $g$ is an $n \times 1$ image vector and $v$ the channelized representation of the image. The Grassmann manifold, denoted by $\mathrm{Gr}(p,n)$ in general, is the set of all $p$-dimensional linear subspaces of $\mathbb{R}^n$, and has various applications in machine learning, image processing, low-rank matrix approximation, and model reduction. Any figure of merit must satisfy the invariance condition \cite{kupinski_15}
\begin{align}\label{inherent_grass}
    f(MT) = f(T),
\end{align}
for any full-rank matrix M, implying that $f$ depends only on the subspace spanned by the rows of 
T. Thus, the optimization problem is inherently defined over the Grassmann manifold. While our motivating example lies on the Grassmannian, we consider a more general Riemannian manifold optimization framework, which can later be specialized to this setting. To that end, we formulate the problem as minimizing a smooth (possibly nonconvex) function 
$f$ over a general Riemannian manifold $\mathcal M$:
\begin{align}\label{main}\min_{X \in \mathcal{M}} f(X),\end{align}
where $\mathcal{M}$ is a Riemannian manifold and $f$ is geodesically Lipschitz-smooth. 

 Gradient-based methods are commonly used for manifold optimization, but in HDLSS settings, gradient estimates can be biased due to limited data. This undermines the effectiveness of conventional stochastic methods, which rely on unbiased gradients. To address this, our work introduces the Riemannian Inexact Gradient Descent (RiGD) algorithm, offering a robust alternative when exact or unbiased gradients are unavailable.

After introducing our algorithm for Riemannian manifolds, it is further specialized for the Grassmann where closed-form expressions of relevant operators are available. In the numerical section, we explore the application of our algorithm to optimal linear image compression with the goal of quadratic test statistics for detection. 

\subsection{Related work}\label{related work}

The term HDLSS was coined by Hall et al. \cite{hall_hdlss}, where the authors noted the emergence of data sets with large (and growing) dimension coupled with low (or fixed) sample sizes. It is well understood that the task of estimation becomes exceedingly difficult in such settings, motivating the need to reduce the dimension of the problem to make statistical analysis tractable. With technological advancements in imaging, images today have large dimensions often coupled with a small number of samples - examples include medical imaging and text processing (see \cite{HDLSS_1}, \cite{HDLSS_2}, \cite{HDLSS_3}). To tackle the HDLSS image setting, one may represent an image by a linear subspace which lies in the Grassmann manifold or Grassmannian. An extension of linear discriminant analysis to problems in which data consist of linear subspaces was presented in \cite{hamm2008grassmann}, called Grassmann Discriminant Analysis (GDA). GDA embeds the Grassmann into a higher dimensional Hilbert space and then uses algorithms suited for Hilbert spaces.

In \cite{huang_2015}, the authors propose a projection-based approach, transforming data in a higher dimensional Grassmann manifold to a lower, more discriminant Grassmann manifold by learning a transformation matrix, making the approach different from other traditional methods which rely on embedding the Grassmannian in a higher dimension Hilbert space. As for the optimization algorithm, the authors use the established nonlinear Riemannian Conjugate Gradient (RCG) method (we refer the reader to \cite{edelman1998}, \cite{AbsMahSep2008}, \cite{boumal2023intromanifolds} for a rigorous treatment of Riemannian algorithms). This work is expanded in \cite{wang_2017} where the authors propose Structure Maintaining Discriminant Maps (SMDM), extending the notion of dimension reduction from Euclidean space to manifolds. Again, the optimization problem is solved using the RCG algorithm.

In this paper, our numerical work considers the problem presented in Kupinski et al. \cite{kupinski_15} where the authors use the symmetrized Kullback-Leibler (KL) or Jeffrey's divergence for binary classification. Advantages of the proposed method include closed form expressions for the objective function and gradient as well as computational efficiency via dimension reduction. We note that \cite{kupinski_15} did not present novel optimization algorithms and in our work we utilize the Riemannian geometry of the Grassmann manifold through the manifold optimization approach. The optimal matrix $T^*$ facilitates dimension reduction while maximizing Jeffrey's divergence between the two probability distributions in the lower-dimensional subspace. The proposed algorithm is designed for HDLSS settings where errors in gradient calculation impact algorithm performance.

Early work to adapt Euclidean algorithms to Riemannian manifolds may be traced to \cite{Luenberger1972}, \cite{Luenberger1973-kb}. Gabay \cite{Gabay1982-lv} introduced steepest descent along geodesics, Newton's method and Quasi-Newton methods for Riemannian manifolds. Udriste \cite{Udrişte1994} introduced a general descent algorithm on Riemannian manifolds leveraging the exponential map. Bonnabel \cite{Bonnabel_2013} extends the stochastic gradient descent approach to Riemannian manifolds. The convergence rate for SGD on Riemannian manifolds was improved in \cite{tripuraneni18a}, utilizing an averaging approach. Further improvements to the rate were provided in \cite{zhang2017riemanniansvrgfaststochastic} where the authors present Stochastic Variance Reduced Gradient for Riemannian manifolds. Projection-free extensions with additional constraints (not only the parameter domain being a manifold) for nonconvex and geodesically convex problems were presented by Weber \cite{weber2019nonconvex} along with convergence analysis. We refer the reader to \cite{Hosseini2020} for a broad overview of stochastic optimization of Riemannian manifolds. Given the inherent noise in our setting, one may consider leveraging stochastic gradient methods. However, stochastic methods rely on unbiased estimators of the objective function and we will show that is not the case for our setting. To our knowledge, there are no known algorithms that address biased estimates of gradients for optimization over Riemannian manifolds. Thus, we introduce the Riemannian Inexact Gradient Descent algorithm, which utilizes an inexact approach to gradient-based optimization. While preparing the final version of this manuscript, we became aware of an independent and concurrent work by Zhou et al. \cite{zhou2024inexactriemanniangradientdescent}, which also investigates inexact gradient methods for manifold optimization. Their analysis establishes convergence and, under normalized conditions and the Kurdyka–Łojasiewicz (KL) property, derives a convergence rate. In contrast, our work proves convergence rates without requiring such assumptions, thereby broadening the applicability of the theoretical guarantees. Moreover, our method incorporates an adaptive line search strategy that improves practical performance and eliminates the need for stepsize tuning. Finally, our approach is motivated by and tailored to applications in imaging, offering a complementary perspective to the theoretical developments. 

For the Euclidean setting, \cite{schmidt2011convergenceratesinexactproximalgradient} introduces an inexact proximal algorithm where an error is present in the calculation of the gradient of the smooth term or in the proximity operator with respect to the non-smooth term. Convergence analysis in \cite{schmidt2011convergenceratesinexactproximalgradient} shows that the inexact method achieves the same convergence rate as in the error-free case. In our work, we consider errors in the gradient while minimizing a function over a Riemannian manifold with the constraint set determined by the geometry of the parameter domain. 

\subsection{Contribution}\label{contribution}

Motivated by the absence of an inexact gradient-based optimization approach for Riemannian manifolds, we introduce the \textit{Riemannian Inexact Gradient Descent} algorithm for optimizing functions defined on Riemannian manifolds when only approximate gradient information is available. Our contributions are summarized as follows:
\begin{itemize}
    \item \textbf{Convergence Analysis:}  Assuming standard smoothness and the existence of a minimizer, we establish an $\mathcal{O}(1/K)$ convergence rate for both constant and diminishing step sizes, provided the gradient approximation error diminishes at an appropriate rate. This rate matches that of Riemannian gradient descent when true gradients are known (\cite{AbsMahSep2008} for details).
    
    \item \textbf{Bounded Iterates:} Under mild and commonly used assumptions in Riemannian optimization, we show that the sequence of iterates generated by RiGD remains bounded while preserving the $\mathcal{O}(1/K)$ convergence rate.

    \item \textbf{Line Search Variant:} When only inexact gradient information is available but objective values can be accessed, we introduce a line search method (RiGD-LS) that achieves an $\mathcal{O}(1/K)$ rate and satisfies a sufficient decrease condition.
    
    \item \textbf{Application to Imaging:} We demonstrate the practical effectiveness of RiGD on a binary classification problem in imaging. In this setting, biased gradient information can hinder the performance of traditional gradient descent and stochastic methods, while our approach remains robust and effective.
\end{itemize}

In the next section, mathematical preliminaries and requisite assumptions are established along with key definitions. Then, we present our optimization algorithm without line search and establish its convergence guarantees in Section~\ref{proposed method}. The line search variant and its corresponding analysis are discussed in Section~\ref{sec:line search}. In Section~\ref{sec:grassmann}, we specialize our method to the Grassmann manifold and highlight its geometric structure. After establishing the theoretical guarantees of the algorithm, we apply it to a binary classification problem in imaging in Section~\ref{results}.

\section{Mathematical preliminaries}\label{mathematical prelims}
 
\begin{figure}[!h]
\centering
\begin{minipage}{0.65\textwidth}
Denote by $\mathcal{M}$ a smooth differentiable manifold of dimension $n$ that locally resembles Euclidean space i.e. every $x \in \mathcal{M}$ has a neighborhood $U \subset \mathcal{M}$ that is homeomorphic to an open subset of $\mathbb{R}^n$. For any point $x \in \mathcal{M}$, the associated tangent space is denoted by $\mathcal{T}_x\mathcal{M}$ which contains the set of tangent vectors at $x$. A geodesic $\gamma$ is a locally length-minimizing curve; on a plane, geodesics are straight lines, and on a sphere, they are segments of great circles. In Figure \ref{manifold1}, a sphere is presented along with its associated tangent space at a given point and a geodesic between two points on the manifold. The blue curve represents a geodesic $\gamma$ between points $x$ and $y$ on the manifold. In red, we show a tangent vector $g$ that belongs to the tangent space.

A fundamental operation in manifold optimization is the retraction map. One important example is the Exponential Map, defined as follows.
\end{minipage}
\hfill
\begin{minipage}{0.3\textwidth}
\centering
\fbox{\includegraphics[width=\linewidth]{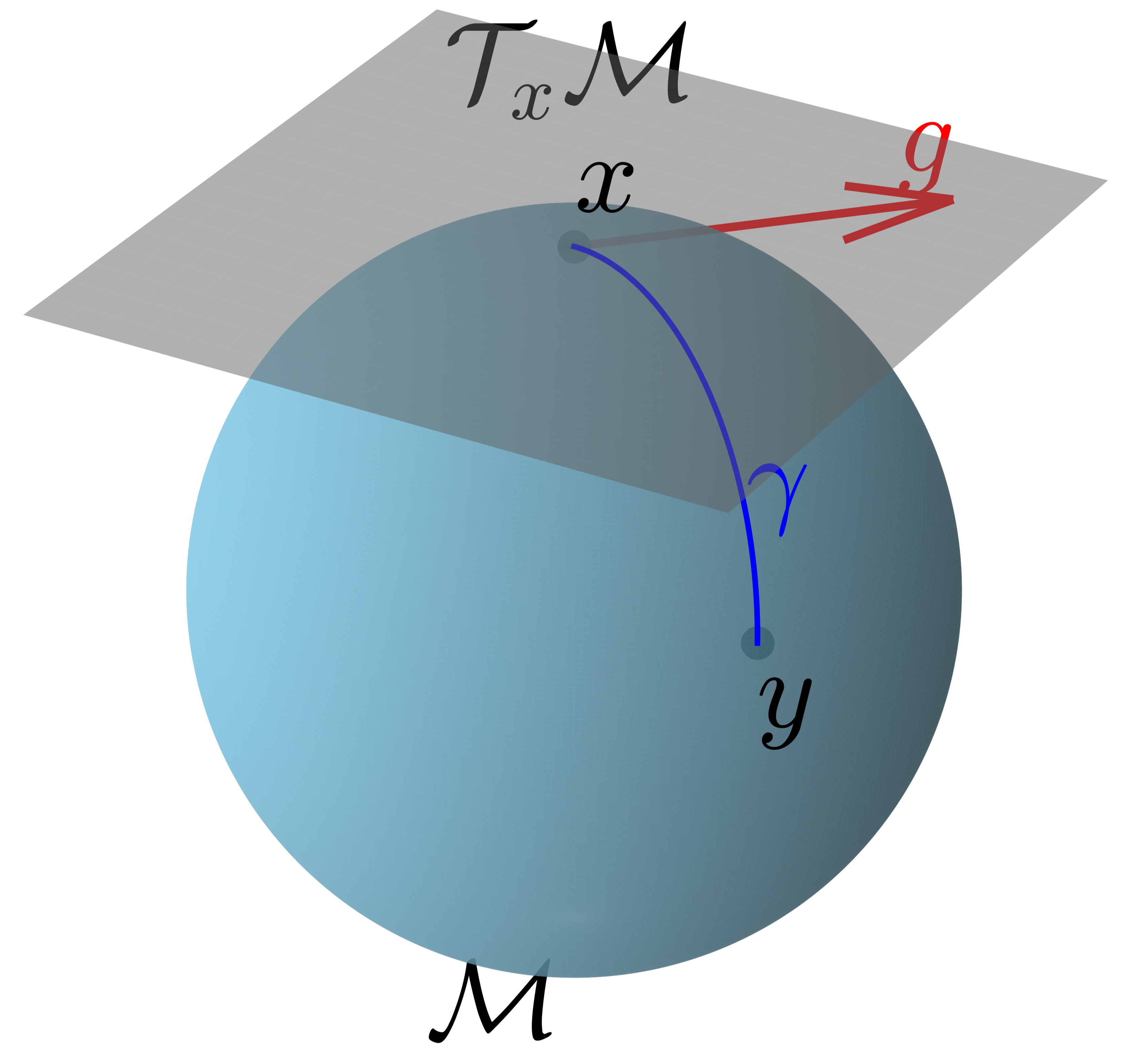}}
\captionof{figure}{A sphere manifold $\mathcal{M}$ and its associated tangent space $\mathcal{T}_x\mathcal{M}$ at point $x$. The geodesic connecting $x$ and $y$ is denoted by $\gamma$ and $g$ is a vector in $\mathcal{T}_x\mathcal{M}$. }
\label{manifold1}
\end{minipage}
\end{figure}

\begin{definition}(Exponential Map)\label{def1}
    Denote by $Exp: \mathcal{T}_x\mathcal{M} \supset U \rightarrow \mathcal{M}$ an exponential map, defined on an open neighborhood $U$ of $0 \in \mathcal{T}_x\mathcal{M}$, such that for any $v \in U$, $y = Exp_x(v) \in \mathcal{M}$ lies on the geodesic $\gamma: [0,1] \mapsto \mathcal{M}$ satisfying $\gamma(0) = x$, $\gamma(1) = y$, and $\gamma'(0) = v$.
\end{definition} 
Additionally, the {\it{inverse exponential map}} is defined as $Exp^{-1}_x: \mathcal{M} \supset V \rightarrow \mathcal{T}_x\mathcal{M}$, where $V$ is a neighborhood of $x \in \mathcal{M}$ such that the exponential map and its inverse are continuously differentiable (i.e., a local diffeomorphism), with $Exp^{-1}_x(x) = 0$.

Now let $g:\mathcal{T}_x\mathcal{M} \times \mathcal{T}_x\mathcal{M} \rightarrow \mathbb{R}$ denote the Riemannian metric, which assigns to each $x$ a positive-definite inner product $g_x(u,v) = \langle u, v \rangle_x$. Throughout the rest of our paper, we omit the subscript of the tangent space whenever it is evident from the context. The metric $g$, induces a norm $\| \cdot \|: \mathcal{T}_x\mathcal{M} \rightarrow \mathbb{R}$ defined by $\left\Vert q \right\Vert_x = \sqrt{g_x(q,q)}$ for $q \in \mathcal{T}_x\mathcal{M}$. Together, the pair $(\mathcal{M}, g)$ describe a Riemannian manifold. Another useful property is that for $y = Exp_x(v) \in \mathcal{M}$ we have $d(x,y) = \left\Vert v \right\Vert_x$, where $\gamma$ is the unique length-minimizing geodesic from $x$ to $y$ and $d$ is a distance function that satisfies positivity, symmetry and the triangle inequality. For simplicity of the notation, we use $\|\cdot\|$ instead of $\|\cdot\|_x$ throughout the paper.

The Riemannian metric also provides the natural notion of gradient on the manifold. For a differentiable function $f$ on a finite-dimensional real inner-product space, the gradient of $f$ at $x$, denoted by $\nabla f(x)$, is the unique vector such that
$Df(x)[v] = \langle \nabla f(x), v \rangle$ for all $v$.
On a Riemannian manifold $(\mathcal{M}, g)$, the gradient of $f$ at $x$, denoted by $grad(f(x))$, is the unique tangent vector in $T_x\mathcal{M}$ such that
$Df(x)[v] = g_x(grad(f(x)), v)$ for all $v \in T_x\mathcal{M}.$



\subsection{Assumptions}

We state assumptions relevant to the mathematical work presented in this paper.

\begin{assumption}\label{asu1}
(Smoothness) Let $f:\mathcal{M} \rightarrow \mathbb{R}$ be a differentiable function. Then $f$ is $L$-smooth if its gradient, $grad \ f(x)$ satisfies 
\begin{equation}
    f(y) \leq f(x) + \langle grad \ f(x), Exp_x^{-1}(y) \rangle_x + \frac{L}{2}d^2(x,y),
\end{equation}
for all $x, y \in \mathcal{M}$.
\end{assumption}

The above is a standard assumption in the optimization literature and is used in the convergence rate proof \cite{zhang2016firstordermethodsgeodesicallyconvex, weber2023riemannian}. The next assumption is required to establish the boundedness of the iterates generated by our proposed algorithm.

\begin{assumption} \label{asu3}
(Bounded Sublevel Set) Let $f:\mathcal{M} \rightarrow \mathbb{R}$ be a differentiable function. Denote by $\mathcal{L}^-_c = \{x \in \mathcal{M}: f(x) \leq c\}$ the sublevel set of $f$. The sublevel set is said to be bounded if it is contained within a geodesic ball of finite radius. More precisely, $\mathcal{L}^-_c$ is bounded if there exists $x_0 \in \mathcal{M}$ and a finite radius $R > 0$ such that
\[
\mathcal{L}^-_c \subseteq B_{R}(x_0), 
\]
where $B_{R}(x_0) = \{x \in \mathcal{M} \mid d(x, x_0) < R\}$ is a geodesic ball centered at $x_0$.
\end{assumption}
Given the boundedness of the sublevel set in Assumption~\ref{asu3}, we further assume that it is closed to ensure the existence of a minimizer.
\begin{assumption} \label{asu2}
(Existence of a Minimum) Let $f:\mathcal{M} \rightarrow \mathbb{R}$ be a differentiable function. Suppose there exists $x_0 \in \mathcal{M}$ such that the sublevel set
\[
\mathcal{L}_c := \{ x \in \mathcal{M} \mid f(x) \leq f(x_0) \}
\]
is nonempty, closed, and bounded. Then, there exists $x^* \in \mathcal{L}_c$ such that
\[
f(x^*) \leq f(x), \qquad \forall x \in \mathcal{L}_c.
\]
\end{assumption}




\section{Algorithm Design and Theoretical Guarantees} \label{proposed method}

In this section, we propose the Riemannian Inexact Gradient Descent algorithm, to minimize a nonconvex function $f$, over a Riemannian manifold $(\mathcal{M}, g)$ using an inexact gradient. The details can be seen in Algorithm \ref{rigd}. 

\begin{algorithm}[htbp]
\caption{Riemannian Inexact Gradient Descent (RiGD)}\label{rigd}
\begin{algorithmic}[1]
\STATE{\bf Input:} Initial point $x_0 \in \mathcal{M}$, step size $\eta$ 
\FOR{$k = 0, \dots, K-1$}  
\STATE  $\Delta_k \leftarrow argmin_{\Delta \in \mathcal{T}_{x_k} \mathcal{M}} \left\Vert \Delta - \widetilde{grad}(f(x_k)) \right\Vert$ 
\STATE $x_{k+1} = Exp_{x_k}(-\eta_k\Delta_k)$ 
\ENDFOR
\end{algorithmic}
\end{algorithm}

At each iteration $k$, the algorithm has access to an \emph{inexact gradient}, denoted by $\widetilde{grad}(f(x_k))$. Throughout this work, we assume that the manifold admits an ambient matrix-space representation together with a well-defined projection onto the tangent space at each point. In this setting, our method allows for inexact gradient computations that may not initially belong to the tangent space. These inexact gradients are therefore projected onto the tangent space before the update is performed. Our analysis relies on this tangent-space projection mechanism, rather than on the manifold being a linear Euclidean space or globally embedded as a Euclidean submanifold. This viewpoint covers standard embedded manifolds as well as matrix-represented quotient manifolds \cite{edelman1998} such as the Grassmannian, which is the focus of our numerical section. 

To obtain a valid search direction on the manifold, we project this inexact gradient onto the tangent space, resulting in a direction $\Delta_k$. Specifically, $\Delta_k$ is the closest tangent vector to $\widetilde{grad}(f(x_k))$, i.e., 
$\Delta_k = \arg\min_{\Delta \in \mathcal{T}_{x_k} \mathcal{M}} \| \Delta - \widetilde{grad}(f(x_k)) \|$.
The next iterate, $x_{k+1}$, is obtained by moving from $x_k$ along the geodesic in the direction of $-\Delta_k$ with step size $\eta_k$, using the exponential map as $x_{k+1} = Exp_{x_k}(-\eta_k \Delta_k)$. This update ensures that the iterate remains on the manifold. The process is repeated for $K$ iterations.

\subsection{Convergence Analysis} \label{convergence analysis}
\noindent In this section we study the convergence properties of our proposed algorithm. 

{\begin{theorem}\label{th:rate}
Suppose Assumptions~\ref{asu1}-\ref{asu2} hold. Define the error term as $e_k \triangleq {grad}(f(x_k)) - \Delta_k$, and let $\{x_k\}$ be the sequence of iterates generated by Algorithm~\ref{rigd}, with step size satisfying $\eta_k \leq \frac{\alpha_k}{1 + 2\alpha_k L}$, for some $\alpha_k>0$. Then, the following holds: 
   \begin{align}\label{th:result}
    \frac{\eta_k}{2}\|grad(f(x_{k}))\|^2 \leq f(x_k)-f(x_{k+1})+\left(\frac{\alpha_k}{2}+L\eta_k^2\right)\|e_k\|^2.
    \end{align}
\end{theorem}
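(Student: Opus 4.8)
The plan is to run a single-step descent argument: invoke the geodesic $L$-smoothness inequality of Assumption~\ref{asu1} along the update $x_{k+1} = \operatorname{Exp}_{x_k}(-\eta_k\Delta_k)$, then turn the inexact search direction into the true gradient plus an error via the definition of $e_k$, and finally use Young's inequality together with the prescribed step-size rule to collapse everything into \eqref{th:result}. Concretely, I would first apply Assumption~\ref{asu1} with $x = x_k$ and $y = x_{k+1}$. By Definition~\ref{def1}, $\operatorname{Exp}_{x_k}^{-1}(x_{k+1}) = -\eta_k\Delta_k$ and $d(x_k,x_{k+1}) = \eta_k\|\Delta_k\|$, so that
$$f(x_{k+1}) \le f(x_k) - \eta_k\langle \operatorname{grad} f(x_k), \Delta_k\rangle + \tfrac{L}{2}\eta_k^2\|\Delta_k\|^2.$$

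Next I would substitute $\Delta_k = \operatorname{grad} f(x_k) - e_k$ (both vectors lie in $\mathcal{T}_{x_k}\mathcal{M}$, so all inner products and norms are taken in the Riemannian metric at $x_k$, and the ambient/inexact gradient never needs to reappear). The linear term becomes $-\eta_k\|\operatorname{grad} f(x_k)\|^2 + \eta_k\langle \operatorname{grad} f(x_k), e_k\rangle$, and for the quadratic term I would use the crude bound $\|\Delta_k\|^2 = \|\operatorname{grad} f(x_k) - e_k\|^2 \le 2\|\operatorname{grad} f(x_k)\|^2 + 2\|e_k\|^2$, which is chosen because it feeds $\tfrac{L}{2}\eta_k^2 \cdot 2\|e_k\|^2 = L\eta_k^2\|e_k\|^2$ directly into the target constant. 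For the cross term I would apply Young's inequality with weight tuned to $\alpha_k$, namely $\eta_k\langle \operatorname{grad} f(x_k), e_k\rangle \le \tfrac{\eta_k^2}{2\alpha_k}\|\operatorname{grad} f(x_k)\|^2 + \tfrac{\alpha_k}{2}\|e_k\|^2$, so that the error picks up exactly the $\tfrac{\alpha_k}{2}\|e_k\|^2$ appearing in \eqref{th:result}. Collecting terms then leaves a coefficient of $-\eta_k + \tfrac{\eta_k^2}{2\alpha_k} + L\eta_k^2$ on $\|\operatorname{grad} f(x_k)\|^2$ and a total of $\bigl(\tfrac{\alpha_k}{2} + L\eta_k^2\bigr)\|e_k\|^2$ on the error.

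The last step is to check that the gradient coefficient is at most $-\tfrac{\eta_k}{2}$, i.e. that $\tfrac{\eta_k}{2\alpha_k} + L\eta_k \le \tfrac12$, equivalently $\tfrac{\eta_k}{\alpha_k} + 2L\eta_k \le 1$; clearing denominators, this is precisely the hypothesis $\eta_k \le \tfrac{\alpha_k}{1+2\alpha_k L}$. Substituting this back and rearranging gives $\tfrac{\eta_k}{2}\|\operatorname{grad} f(x_k)\|^2 \le f(x_k) - f(x_{k+1}) + \bigl(\tfrac{\alpha_k}{2} + L\eta_k^2\bigr)\|e_k\|^2$, which is exactly \eqref{th:result}. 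I do not anticipate a real obstacle: the argument is entirely a one-iteration estimate. The only points requiring care are choosing the Young's-inequality weight so that the stated step-size condition emerges cleanly rather than as an artifact, and keeping the projection step of Algorithm~\ref{rigd} implicit through the decomposition $\Delta_k = \operatorname{grad} f(x_k) - e_k$ so that only the tangential direction $\Delta_k$ and the tangent-space error $e_k$ enter the estimate; since Assumption~\ref{asu1} is posited for all $x,y\in\mathcal{M}$, applying it at $y = x_{k+1}$ needs no extra domain hypothesis on the exponential map.
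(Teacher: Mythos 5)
Your proposal is correct and follows essentially the same route as the paper's proof: apply the geodesic smoothness inequality along the update, substitute $\Delta_k = \operatorname{grad} f(x_k) - e_k$, use Young's inequality both to bound $\|\Delta_k\|^2$ by $2\|\operatorname{grad} f(x_k)\|^2 + 2\|e_k\|^2$ and to split the cross term with parameter $\alpha_k$, then verify that the step-size condition $\eta_k \le \alpha_k/(1+2\alpha_k L)$ makes the gradient coefficient at least $\eta_k/2$. No gaps; the argument matches the paper step for step.
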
}

\begin{proof}
Using Assumption \ref{asu1}, the fact that $d(x_k, x_{k+1}) = \left\Vert -\eta_k\Delta_k \right\Vert$ and \\ 
$Exp_{x_k}^{-1}(x_{k+1}) = -\eta_k\Delta_k$, we have that
    \begin{align*}
    f(x_{k+1}) &\leq f(x_k) + \langle grad(f(x_k)), Exp_{x_k}^{-1}(x_{k+1}) \rangle  + \frac{L}{2} d^2(x_k, x_{k+1})  \\
    &= f(x_k) + \langle grad(f(x_k)), -\eta_k\Delta_k \rangle  + \frac{L}{2} \|-\eta_k\Delta_k\|^2\\
    & \leq f(x_k) +\langle grad(f(x_k)),-\eta_k grad(f(x_k))+\eta_k e_k\rangle \\
    & \quad +\frac{L\eta_k^2}{2}\left(2\|e_k\|^2+2\|grad(f(x_k))\|^2\right),
\end{align*}
where 
we used definition of $e_k$ and Young's Inequality. Now, using Young's Inequality again for any $\alpha_k>0$, we obtain:

\begin{align*}
    f(x_{k+1}) &\leq f(x_k) -\eta_k\|grad(f(x_k))\|^2+\frac{\eta_k^2}{2\alpha_k}\|grad(f(x_k))\|^2 +\frac{\alpha_k}{2}\|e_k\|^2 \\  
    & \quad +\frac{L\eta_k^2}{2}\left(2\|e_k\|^2+2\|grad(f(x_k))\|^2\right).
\end{align*}

Rearranging the terms, we get:

\begin{align}\label{gen_ineq}
    \left(\eta_k-\frac{\eta_k^2}{2\alpha_k}-L\eta_k^2\right)\|grad(f(x_k))\|^2 \leq f(x_k)-f(x_{K+1})+\left(\frac{\alpha_k}{2}+L\eta_k^2\right)\|e_k\|^2.
\end{align}

Choosing $\eta_k\leq \frac{\alpha_k}{1+2\alpha_k L}$, we can bound the left hand side from below by $\frac{\eta_k}{2}\|grad(f(x_k))\|^2$. 
\end{proof}

\begin{remark}[Extension to general retractions]\label{rem:retraction}
Although Algorithm~\ref{rigd} is stated using the exponential map, the same analysis extends to a general retraction \(R\) by replacing the update
\[
x_{k+1}=Exp_{x_k}(-\eta_k\Delta_k)
\quad \text{with} \quad
x_{k+1}=R_{x_k}(-\eta_k\Delta_k).
\]
In this case, it suffices to assume the retraction-smoothness condition in Assumption~\ref{asu1}, i.e.,
\[
f(R_x(z)) \le f(x)+\langle grad (f(x)),z\rangle_x+\frac{L}{2}\|z\|_x^2,
\]
for all admissible \(x\in\mathcal M\) and \(z\in T_x\mathcal M\). Under this assumption, the descent argument used in the proof proceeds by setting \(x=x_k\) and \(z=-\eta_k\Delta_k\), and all subsequent steps remain unchanged.
We retain the exponential map in the main presentation for clarity and because it is the canonical Riemannian update. Moreover, for the Grassmannian considered in our numerical experiments, \(Exp\) admits a closed-form expression, making it a natural choice in our setting.
\end{remark}

\begin{corollary}
    Define ${k^*}\in\mbox{argmin}_{0\leq k\leq K-1}\{{grad(f(x_k))}\}$. Under the premises of Theorem~\ref{th:rate}, the following hold:
    \begin{itemize}
    \item[(i)] If $\|e_k\|=\mathcal O(1/(k+1)^{0.5+\nu})$ for some $\nu>0$, $\alpha_k=1/L$ and $\eta_k=1/(3L)$, then  $\|grad(f(x_{k^*}))\|^2 = \mathcal O(1/K)$.
    \item[(ii)] If $\|e_k\|=\mathcal O(1/(k+1)^{0.5})$, $\alpha_k=\frac{1}{2L\log^2(k+2)}$ and $\eta_k=\alpha_k/2$, then $\|grad(f(x_{k^*}))\|^2\ = \mathcal O(\log^2(K+2)/K)$.
    \end{itemize}
\end{corollary}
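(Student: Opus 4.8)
The plan is to telescope the per-iteration inequality from Theorem~\ref{th:rate} over $k=0,\dots,K-1$ and then extract a bound on $\min_k \|grad(f(x_k))\|^2$. Concretely, summing \eqref{th:result} gives
\[
\sum_{k=0}^{K-1}\frac{\eta_k}{2}\|grad(f(x_k))\|^2 \;\le\; f(x_0)-f(x_K) + \sum_{k=0}^{K-1}\Bigl(\tfrac{\alpha_k}{2}+L\eta_k^2\Bigr)\|e_k\|^2.
\]
By Assumption~\ref{asu2} the right-hand side first term is bounded by $f(x_0)-f(x^*)=:D<\infty$. For part (i), with $\alpha_k=1/L$ and $\eta_k=1/(3L)$ constant, the left side is $\tfrac{1}{6L}\sum_{k}\|grad(f(x_k))\|^2 \ge \tfrac{K}{6L}\|grad(f(x_{k^*}))\|^2$, so it remains to show the error sum $\sum_{k=0}^{\infty}\bigl(\tfrac{1}{2L}+\tfrac{1}{9L}\bigr)\|e_k\|^2$ converges; but $\|e_k\|^2=\mathcal O(1/(k+1)^{1+2\delta})$ and $\sum (k+1)^{-(1+2\delta)}<\infty$ for $\delta>0$, so the whole right side is a finite constant $C$, giving $\|grad(f(x_{k^*}))\|^2 \le 6LC/K = \mathcal O(1/K)$.

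For part (ii), the step sizes are now diminishing, $\eta_k=\alpha_k/2 = 1/(4L\log^2(k+2))$, and one should first verify that the hypothesis $\eta_k\le \alpha_k/(1+2\alpha_k L)$ of Theorem~\ref{th:rate} is met: since $\alpha_k\le 1/(2L)$ we have $1+2\alpha_k L\le 2$, hence $\alpha_k/(1+2\alpha_k L)\ge \alpha_k/2 = \eta_k$, so the theorem applies. Telescoping again, the left side is bounded below by $\bigl(\min_{k<K}\tfrac{\eta_k}{2}\bigr)\sum_{k}\|grad(f(x_k))\|^2 \ge \tfrac{\eta_{K-1}}{2}\,K\,\|grad(f(x_{k^*}))\|^2 = \tfrac{K}{8L\log^2(K+1)}\|grad(f(x_{k^*}))\|^2$. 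On the right side, $\tfrac{\alpha_k}{2}+L\eta_k^2 = \tfrac{1}{4L\log^2(k+2)}+\tfrac{1}{16L\log^4(k+2)} = \mathcal O\!\bigl(1/\log^2(k+2)\bigr)$, and $\|e_k\|^2=\mathcal O(1/(k+1))$, so the error sum is $\sum_{k=0}^{K-1}\mathcal O\!\bigl(\tfrac{1}{(k+1)\log^2(k+2)}\bigr)$, which converges (by comparison with $\int dx/(x\log^2 x)$) to a finite constant. Combining, $\|grad(f(x_{k^*}))\|^2 \le \mathcal O(\log^2(K+1)/K) = \mathcal O(\log^2(K+2)/K)$.

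The main obstacle I anticipate is bookkeeping rather than conceptual: one must be careful that the lower bound on the left-hand side uses the smallest $\eta_k$ over the range (which, for the decreasing sequence in (ii), is $\eta_{K-1}\sim 1/\log^2 K$, and this is exactly what produces the extra $\log^2$ factor), and one must confirm the convergence of the weighted error series $\sum (\alpha_k + L\eta_k^2)\|e_k\|^2$ in each case — the exponent $0.5+\delta$ in (i) versus $0.5$ in (ii) is precisely calibrated so that, after squaring and multiplying by the coefficient decay, the series is summable (borderline-summable via the $1/(k\log^2 k)$ comparison in case (ii)). A secondary point worth stating explicitly is that $f(x_K)\ge f(x^*)$ requires the iterates to stay in the sublevel set $\mathcal L_c$; this follows because \eqref{th:result} implies $f(x_{k+1})\le f(x_k)+(\tfrac{\alpha_k}{2}+L\eta_k^2)\|e_k\|^2$, and the accumulated error increments are summable, so $f(x_k)$ stays bounded above — though if one wants $f(x_K)\ge f(x^*)$ cleanly one simply notes $x^*$ is the global minimizer on $\mathcal L_c$ and the iterates never leave it once the error budget is controlled.
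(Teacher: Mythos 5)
Your proposal is correct and follows essentially the same route as the paper: telescope the bound of Theorem~\ref{th:rate}, lower-bound the left-hand side using the smallest step size over the range (which is exactly what produces the extra $\log^2$ factor in part (ii)), and show the weighted error series is summable by integral comparison. The only differences are cosmetic---the paper works out explicit constants (e.g.\ the coefficient $\tfrac{11}{3}$ in (i) and the bounds $4$ and $5$ on the two error sums in (ii)) where you argue via $\mathcal{O}$-comparisons, and your explicit verification of the step-size condition $\eta_k\le \alpha_k/(1+2\alpha_k L)$ in (ii) is a small extra care step that the paper omits.
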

\begin{proof}
(i) Defining  ${k^*}\in\mbox{argmin}_{0\leq k\leq K-1}\{{grad(f(x_k))}\}$, choosing $\alpha_k=1/L$ and $\eta_k=1/(3L)$ in \eqref{th:result} and summing over $k = 0, \dots, K-1$, we obtain:
\begin{align}
\label{cor:result1} \|grad(f(x_{k^*}))\|^2  \leq \frac{6L}{K}\left(f(x_0)-f(x^*)\right)+\frac{11}{3K}\sum_{k=0}^{K-1}\|e_k\|^2.
\end{align}
Now choosing $\|e_k\|=\mathcal O(1/(k+1)^{0.5+\nu})$ we can obtain
    \begin{align*}
        \sum_{k=0}^{K-1}\|e_k\|^2&=\sum_{k=0}^{K-1}\frac{1}{(k+1)^{1+2\nu}} \\
& =
\sum_{n=1}^{K}\frac{1}{n^{1+2\nu}} 
=
1+\sum_{n=2}^{K}\frac{1}{n^{1+2\nu}} 
\le
1+\int_{1}^{K} x^{-(1+2\nu)}\,dx 
\\&=
1+\frac{1-K^{-2\nu}}{2\nu}
< 1+\frac{1}{2\nu}.
    \end{align*}
    Hence, from \eqref{cor:result1} we get that
    \begin{align*}
       \|grad(f(x_{k^*}))\|^2\leq \frac{6L}{K}(f(x_0) - f(x^*)) + \frac{11}{3K}(1+\tfrac{1}{2\nu})= \mathcal O(1/K).
    \end{align*}
(ii) Choosing $\alpha_k=\frac{1}{2L\log^2(k+2)}$ and $\eta_k=\alpha_k/2$ in \eqref{th:result}, summing both sides over $k = 0, \dots, K-1$, and dividing by $K$, we obtain:
\begin{align*}
    &\frac{1}{K}\sum_{k=0}^{K-1}\frac{1}{8L\log^2(k+2)}\|grad(f(x_{k}))\|^2 \\&\quad\leq\frac{f(x_0)-f(x^*)}{K} + \frac{1}{K}\sum_{k=0}^{K-1}\left(\frac{1}{4L\log^2(k+2)}+\frac{1}{16L\log^4(k+2)}\right)\|e_k\|^2.
\end{align*}
Note that the $\frac{1}{\log^2(K+1)}\leq \frac{1}{\log^2(k+1)}$, hence we can bound the left hand side from below by $\frac{1}{8L\log^2(K+2)}\|grad(f(x_{k^*}))\|^2$, where $k^*$ such that ${k^*}\in\operatorname{argmin}_{0\leq k\leq K-1}\{{grad(f(x_k))}\}$.

Moreover, choosing $\|e_k\|^2=1/(k+1)$, one can show that:
\begin{align*}
    \sum_{k=0}^{K-1}\frac{1}{\log^2(k+2)}\|e_k\|^2\leq 4,\qquad
    \sum_{k=0}^{K-1}\frac{1}{\log^4(k+2)}\|e_k\|^2\leq 5.
\end{align*}
Hence, we obtain the following: 
\begin{align*}
   \|grad(f(x_{k^*}))\|^2 \leq \frac{8L\log^2(K+2)}{K}\left(f(x_0)-f(x^*)\right) + \frac{21}{2K}log^2(K+2).
\end{align*}
\end{proof}

{\bf Stochastic biased extension.} Although our main focus is on deterministic inexact Riemannian gradients, the same analysis also yields an extension to stochastic biased estimators. In particular, we consider the stochastic optimization problem
\[
\min_{x\in\mathcal M} f(x),
\quad \text{where }\quad f(x):=\mathbb E_{\omega}\big[f(x;\omega)\big],
\]
and \(f(x;\omega)\) denotes the sample objective function associated with the random variable \(\omega\). We further assume that the stochastic gradient estimator satisfies the following conditional bias--variance model.
\begin{assumption}\label{asu:stoch}
Let \(\{\omega_k\}_{k\ge 0}\) be the random variables used to generate the stochastic estimator \(\Delta_k\), and let $\{\mathcal{F}_k\}_{k\ge 0}$ be the filtration generated by $\{x_0,\omega_0,\ldots,\omega_{k-1}\}$.
Assume that $\Delta_k\in \mathcal{T}_{x_k}\mathcal{M}$ satisfies
\[
\mathbb{E}[\Delta_k \mid \mathcal{F}_k] = grad(f(x_k)) + b_k,
\]
where $b_k\in \mathcal{T}_{x_k}\mathcal{M}$ is a (possibly nonzero) bias term.
Moreover, the conditional variance of the stochastic fluctuation is bounded:
\[
\mathbb{E}\!\left[\left\|\Delta_k-\mathbb{E}[\Delta_k\mid \mathcal{F}_k]\right\|^2 \,\middle|\, \mathcal{F}_k\right]\le \sigma_k^2.
\]
\end{assumption}
Based on the stochastic biased estimator described in Assumption~\ref{asu:stoch}, we consider the following algorithm.
\begin{algorithm}[htbp]
\caption{Stochastic Biased Riemannian Gradient Descent (SB-RGD)}\label{alg:stoch}
\begin{algorithmic}[1]
\STATE{\bf Input:} Initial point $x_0 \in \mathcal{M}$, step sizes $\{\eta_k\}_{k\geq 0}$
\FOR{$k = 0, \dots, K-1$}  
\STATE Sample $\omega_k$ and compute $\widetilde{grad}(f(x_k;\omega_k))$
\STATE $\Delta_k \leftarrow \arg\min_{\Delta \in \mathcal{T}_{x_k}\mathcal{M}} \left\Vert \Delta - \widetilde{grad}(f(x_k;\omega_k)) \right\Vert$
\STATE $x_{k+1} = Exp_{x_k}(-\eta_k\Delta_k)$
\ENDFOR
\end{algorithmic}
\end{algorithm}
The next result shows that the deterministic descent estimate extends directly to this stochastic biased setting.
\begin{theorem}\label{th:rate_stoch}
Suppose Assumptions~\ref{asu1}--\ref{asu2} and Assumption~\ref{asu:stoch} hold.
Define the random error term $e_k \triangleq grad(f(x_k))-\Delta_k$.
Let $\{x_k\}$ be generated by Algorithm~\ref{alg:stoch}, with step size satisfying
$\eta_k \leq \frac{\alpha_k}{1 + 2\alpha_k L}$ for some $\alpha_k>0$.
Then, the following holds:
\begin{align}\label{th:result_stoch}
\mathbb{E}\!\left[\frac{\eta_k}{2}\|grad(f(x_k))\|^2\right]
\leq
\mathbb{E}\!\left[f(x_k)-f(x_{k+1})\right]
+\left(\frac{\alpha_k}{2}+L\eta_k^2\right)\mathbb{E}\!\left[\|b_k\|^2+\sigma_k^2\right].
\end{align}
(ii) Suppose $\eta_k=\eta$ and $\alpha_k=\alpha,$
with $\eta \le \frac{\alpha}{1+2\alpha L}$. If $\mathbb{E}[\|b_k\|^2+\sigma_k^2]=O(1/k^{1+\bar\delta})$ for some \(\bar\delta>0\), then \(\mathbb E\left[\|grad(f(x_{k^*}))\|^2\right] = \mathcal O(1/K)\).
\end{theorem}
\begin{proof} We can follow the steps of proof of Theorem \ref{th:rate} to show inequality \eqref{gen_ineq} as follows:
    \begin{align*}
\left(\eta_k-\frac{\eta_k^2}{2\alpha_k}-L\eta_k^2\right)\|grad(f(x_k))\|^2
\le
f(x_k)-f(x_{k+1})
+\left(\frac{\alpha_k}{2}+L\eta_k^2\right)\|e_k\|^2.
\end{align*}
Now take conditional expectation with respect to $\mathcal{F}_k$ on both sides.

\begin{align}\label{bound_cond_stoch}
\left(\eta_k-\frac{\eta_k^2}{2\alpha_k}-L\eta_k^2\right)\|grad(f(x_k))\|^2
& \le
\mathbb{E}\!\left[f(x_k)-f(x_{k+1}) \,\middle|\, \mathcal{F}_k\right] \nonumber \\
& \qquad +\left(\frac{\alpha_k}{2}+L\eta_k^2\right)
\mathbb{E}\!\left[\|e_k\|^2 \,\middle|\, \mathcal{F}_k\right].
\end{align}
Moreover, under Assumption~\ref{asu:stoch},
\[
e_k = grad(f(x_k))-\Delta_k
= -b_k - \left(\Delta_k-\mathbb{E}[\Delta_k\mid \mathcal{F}_k]\right),
\]
and the cross term vanishes under conditional expectation, giving
\[
\mathbb{E}\!\left[\|e_k\|^2 \,\middle|\, \mathcal{F}_k\right]
=
\|b_k\|^2+
\mathbb{E}\!\left[\left\|\Delta_k-\mathbb{E}[\Delta_k\mid \mathcal{F}_k]\right\|^2 \,\middle|\, \mathcal{F}_k\right]
\le \|b_k\|^2+\sigma_k^2.
\]
Choosing $\eta_k\le \frac{\alpha_k}{1+2\alpha_k L}$ lower bounds the coefficient on the left of \eqref{bound_cond_stoch} by $\eta_k/2$, substituting and taking expectation completes the proof.\\
(ii) Summing \eqref{th:result_stoch} from $k=0$ to $K-1$ gives
\begin{align*}
&\frac{\eta}{2}\sum_{k=0}^{K-1}\mathbb{E}\!\left[\|grad(f(x_k))\|^2\right] \\
& \qquad \le
\sum_{k=0}^{K-1}\mathbb{E}\!\left[f(x_k)-f(x_{k+1})\right]
+
\left(\frac{\alpha}{2}+L\eta^2\right)\sum_{k=0}^{K-1}\mathbb{E}\!\left[\|b_k\|^2+\sigma_k^2\right]\\
& \qquad =
\mathbb{E}[f(x_0)-f(x_K)]
+
\left(\frac{\alpha}{2}+L\eta^2\right)\sum_{k=0}^{K-1}\mathbb{E}\!\left[\|b_k\|^2+\sigma_k^2\right].
\end{align*}
Since $f(x_K)\ge f(x^\star)$, we obtain
\[
\frac{\eta}{2}\sum_{k=0}^{K-1}\mathbb{E}\!\left[\|grad(f(x_k))\|^2\right]
\le
f(x_0)-f(x^\star)
+
\left(\frac{\alpha}{2}+L\eta^2\right)\sum_{k=0}^{K-1}\mathbb{E}\!\left[\|b_k\|^2+\sigma_k^2\right].
\]
By assumption $\mathbb{E}\!\left[\|b_k\|^2+\sigma_k^2\right]=O\!\left(\frac{1}{k^{1+\bar\delta}}\right),$ for some $\bar\delta>0,$ we get that
$\sum_{k=0}^{K-1}\mathbb{E}\!\left[\|grad(f(x_k))\|^2\right]\le C'$
for some constant $C'>0$.
Then
\[
\mathbb{E}\!\left[\|grad(f(x_{k^*}))\|^2\right]\le \frac{C'}{K} = O\!\left(\frac{1}{K}\right),
\]
where ${k^*}\in\mbox{argmin}_{0\leq k\leq K-1}\{\mathbb E[{grad(f(x_k))}]\}$.
\end{proof}

\subsection{Boundedness of the Iterates Generated by RiGD}
So far, we have established convergence guarantees for the proposed inexact gradient method under the assumption that the error of estimating the gradient diminishes over iterations. However, depending on how this error enters the gradient computation, the analysis may implicitly rely on the boundedness of the iterates. Consider the quadratic objective function $f(x) = \frac{1}{2} x^\top A x + b^\top x + c$, where the gradient is given by  $\nabla f(x) = Ax + b$. Suppose we only have access to an approximate matrix $\tilde A =A + E$, where the error $E$ diminishes over time (i.e., $\|E_k\| \to 0$). In this case, the computed gradient becomes $ \tilde A x + b = \nabla f(x) + E x$, and hence the inexactness in the gradient is $e(x) = E x$.
While the perturbation $E$ may vanish, the total gradient error vanishes only if the iterates $x$ remain bounded. Therefore, to ensure the overall error in the inexact gradient method diminishes, it becomes essential to also establish that the iterates stay within a bounded region. This motivates the next part of our analysis, where we prove the boundedness of iterates generated by the proposed method.

To proceed with the boundedness analysis, we adopt a commonly used relative error assumption on the gradient approximation, namely:
\begin{align*}
    \| e(x) \| \leq \delta \| \nabla f(x) \|,
\end{align*} 
for some $\delta \in[0,1)$. This condition captures scenarios in which the inexactness in the gradient estimate diminishes proportionally as we approach stationarity, and it has been widely used in the analysis of inexact methods in Euclidean space~\cite{khanh2023inexact,vernimmen2024convergence, carter1991,khanh2024newinexactgradientdescent}. Our assumption aligns with these frameworks and facilitates the derivation of boundedness results in the manifold setting. Next, we formally state our assumption.

\begin{assumption}\label{assump:relative}(Relative Error Condition)
Let $e_k \triangleq grad(f(x_k)) - \Delta_k$ denote the gradient approximation error at iteration $k$. There exists a constant $\delta \in [0,1)$ such that
$$\|e_k\| \leq \delta \|grad(f(x_k))\| \quad \text{for all } k.$$
\end{assumption}

In the next theorem, we show that the sequence of iterates generated by Algorithm \ref{rigd} is bounded.
\begin{theorem}\label{th:bounded}(Boundedness of Iterates)
    Suppose Assumptions \ref{asu1}-\ref{assump:relative} hold, then  (i) the sequence $\{x_k\}_k$ generated by Algorithm \ref{rigd} is bounded; (ii) Choosing $\eta_k=\frac{1}{L}$, then $\|grad(f(x_{k^*}))\|^2 = \mathcal O(1/K)$, where ${k^*}\in\mbox{argmin}_k\{{grad(f(x_k))}\}$.
\end{theorem}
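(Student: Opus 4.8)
The plan is to leverage Theorem~\ref{th:rate} together with the relative error condition (Assumption~\ref{assump:relative}) to turn the one-step inequality \eqref{th:result} into a genuine descent guarantee, which simultaneously yields monotonicity of $\{f(x_k)\}$ (hence confinement of the iterates to a sublevel set, giving boundedness via Assumption~\ref{asu3}) and the $\mathcal O(1/K)$ rate. First I would substitute $\|e_k\|^2 \le \delta^2 \|\operatorname{grad} f(x_k)\|^2$ into \eqref{th:result}, so that the error term on the right-hand side becomes a multiple of $\|\operatorname{grad} f(x_k)\|^2$ and can be moved to the left. With the choice $\eta_k = 1/L$ and a suitable $\alpha_k$ (e.g.\ $\alpha_k = 1/L$, matching part (i) of the Corollary), the combined coefficient of $\|\operatorname{grad} f(x_k)\|^2$ on the left is $\tfrac{\eta_k}{2} - \bigl(\tfrac{\alpha_k}{2} + L\eta_k^2\bigr)\delta^2 = \tfrac{1}{2L} - \tfrac{3\delta^2}{2L} = \tfrac{1-3\delta^2}{2L}$. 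The main obstacle — and the place where the argument must be done carefully — is that this is only positive when $\delta < 1/\sqrt 3$, not for all $\delta \in [0,1)$ as the assumption allows; so the honest fix is to keep $\eta_k = 1/L$ but pick $\alpha_k$ small enough (scaling like $\alpha_k \asymp 1/L$ with a constant depending on $\delta$) and re-derive the admissible step-size window $\eta_k \le \alpha_k/(1+2\alpha_k L)$ to check $\eta_k = 1/L$ is still feasible, or, if necessary, state the result for $\eta_k = c(\delta)/L$. I would record the resulting clean inequality as
\begin{align*}
\frac{c_\delta}{L}\,\|\operatorname{grad} f(x_k)\|^2 \;\le\; f(x_k) - f(x_{k+1}),
\end{align*}
for an explicit constant $c_\delta > 0$ depending only on $\delta$.

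From this inequality, part (i) follows immediately: the right-hand side is nonnegative, so $f(x_{k+1}) \le f(x_k) \le \cdots \le f(x_0)$ for all $k$, hence every iterate lies in $\mathcal L_c = \{x : f(x) \le f(x_0)\}$. By Assumption~\ref{asu3} this sublevel set is contained in a geodesic ball $B_R(x_0)$ of finite radius, so $d(x_k, x_0) < R$ for all $k$, which is exactly boundedness of $\{x_k\}_k$. (Assumption~\ref{asu2} is what guarantees $\mathcal L_c$ is nonempty and contains a minimizer, so $f(x^*)$ below is well-defined and finite.)

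For part (ii), I would telescope the descent inequality over $k = 0, \dots, K-1$:
\begin{align*}
\frac{c_\delta}{L}\sum_{k=0}^{K-1}\|\operatorname{grad} f(x_k)\|^2 \;\le\; f(x_0) - f(x_K) \;\le\; f(x_0) - f(x^*),
\end{align*}
using the lower bound $f(x_K) \ge f(x^*)$ from Assumption~\ref{asu2}. Since $\|\operatorname{grad} f(x_{k^*})\|^2 = \min_{0\le k\le K-1}\|\operatorname{grad} f(x_k)\|^2 \le \tfrac{1}{K}\sum_{k=0}^{K-1}\|\operatorname{grad} f(x_k)\|^2$, rearranging gives
\begin{align*}
\|\operatorname{grad} f(x_{k^*})\|^2 \;\le\; \frac{L}{c_\delta K}\bigl(f(x_0) - f(x^*)\bigr) \;=\; \mathcal O(1/K),
\end{align*}
which is the claimed rate. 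The only real subtlety throughout is the bookkeeping in the first paragraph — pinning down the constant $c_\delta$ and confirming $\eta_k = 1/L$ is an admissible step size in the sense of Theorem~\ref{th:rate} for the chosen $\alpha_k$; once that one-step contraction is in hand, both conclusions are short.
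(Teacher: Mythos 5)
Your overall architecture (descent inequality $\Rightarrow$ monotone $f(x_k)$ $\Rightarrow$ iterates stay in the bounded sublevel set for (i); telescoping for (ii)) is exactly the paper's, but the route through Theorem~\ref{th:rate} does not deliver the theorem as stated, and you have correctly sensed where it breaks. First, $\eta_k = 1/L$ is never admissible in Theorem~\ref{th:rate}: the hypothesis $\eta_k \le \frac{\alpha_k}{1+2\alpha_k L}$ caps the step at a value strictly below $\frac{1}{2L}$ for every $\alpha_k>0$, so you cannot invoke \eqref{th:result} with the step size that part (ii) prescribes. Second, even for admissible steps the route cannot cover all $\delta \in [0,1)$: substituting $\|e_k\|^2 \le \delta^2\|\operatorname{grad} f(x_k)\|^2$ into \eqref{th:result} leaves the coefficient $\frac{\eta_k}{2} - \bigl(\frac{\alpha_k}{2}+L\eta_k^2\bigr)\delta^2$, and since the admissibility condition forces $\eta_k < \alpha_k$, one has $\frac{\eta_k}{\alpha_k + 2L\eta_k^2} < 1$, so positivity fails once $\delta$ is close enough to $1$; your proposed repair ($\alpha_k$ small, $\eta_k = c(\delta)/L$ with $c(\delta)\to 0$) proves only a weakened statement. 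The loss is built into Theorem~\ref{th:rate} itself: its proof applies Young's inequality both to $\|\Delta_k\|^2$ and to the cross term, discarding the favorable sign information that the relative error condition provides.

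The paper's proof avoids this by not reusing \eqref{th:result}. It returns to the smoothness inequality and expands $\|\Delta_k\|^2 = \|\operatorname{grad} f(x_k)\|^2 + \|e_k\|^2 - 2\langle e_k,\operatorname{grad} f(x_k)\rangle$ exactly, then bounds the cross term by $\langle e_k,\operatorname{grad} f(x_k)\rangle \le \delta\|\operatorname{grad} f(x_k)\|^2$ (Cauchy--Schwarz plus Assumption~\ref{assump:relative}) and $\|e_k\|^2 \le \delta^2\|\operatorname{grad} f(x_k)\|^2$. This yields $f(x_{k+1}) \le f(x_k) - \eta_k\bigl(1-\delta - L\eta_k(1-\delta)^2/2\bigr)\|\operatorname{grad} f(x_k)\|^2$, whose coefficient with $\eta_k = 1/L$ equals $\frac{1-\delta^2}{2L} > 0$ for every $\delta\in[0,1)$. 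To fix your argument, replace the appeal to Theorem~\ref{th:rate} by this exact one-step expansion; the remainder of your proposal (induction into $\mathcal{L}_{f(x_0)}$ using Assumption~\ref{asu3}, then summing and using $f(x_K)\ge f(x^*)$ from Assumption~\ref{asu2}) then goes through verbatim and matches the paper.
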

\begin{proof}
We prove the result using induction by showing that for any $k\geq 0$, $x_k\in \mathcal{L}_{f(x_0)}(f)$, where $\mathcal{L}_{c}(f)\triangleq \{x\in\mathcal{M}\mid f(x)\leq c\}$ denotes the sublevel set of the function $f$ at some $c\in\mathbb R$. First, note that $x_0\in \mathcal{L}_{f(x_0)}(f)$ trivially holds. Now, suppose $x_k\in \mathcal{L}_{f(x_0)}(f)$. 
Using Assumption \ref{asu1} we have:

\begin{align*}
    f(x_{k+1}) &\leq f(x_k) + \langle grad(f(x_k)), Exp_{x_k}^{-1}(x_{k+1}) \rangle  + \frac{L}{2} d^2(x_k, x_{k+1})  \\
    &= f(x_k) + \langle grad(f(x_k)), -\eta_k\Delta_k \rangle  + \frac{L}{2} \|-\eta_k\Delta_k\|^2\\
    &= f(x_k) +\langle grad(f(x_k)),-\eta_k grad(f(x_k))+\eta_k e_k\rangle \\
    & \quad + \frac{L\eta_k^2}{2}\left(\|e_k\|^2+\|grad(f(x_k))\|^2\right) -L\eta_k^2\langle e_k,grad(f(x_k))\rangle\\
    &=f(x_k)-\left(\eta_k-\frac{L\eta_k^2}{2}\right)\|grad(f(x_k))\|^2 \\
    & \quad +\left(\eta_k-L\eta_k^2\right)\langle e_k,grad(f(x_k))\rangle+\frac{L\eta_k^2}{2}\|e_k\|^2,
\end{align*}
where we used $Exp_{x_k}^{-1}(x_{k+1}) = -\eta_k\Delta_k$ and $d(x_k, x_{k+1}) = \left\Vert -\eta_k\Delta_k \right\Vert$. Moreover, from Assumption \ref{assump:relative}, we have that $$\langle e_k,grad(f(x_k))\rangle\leq \|e_k\|\|grad(f(x_k))\|\leq \delta\|grad(f(x_k))\|^2.$$ Hence, by choosing $\eta_k\leq 1/L$ and simplifying the terms, we get
\begin{align}\label{bound f}
    f(x_{k+1}) \leq f(x_k) -\eta_k(1-\delta-L\eta_k((1-\delta)^2/2))\|grad(f(x_k))\|^2.
\end{align}

Since $\eta_k\leq \frac{1}{L} < \frac{2}{L(1-\delta)}$, one can confirm that $f(x_{k+1}) \leq f(x_k)$. Therefore, using the induction's assumption we conclude that $f(x_{k+1})\leq f(x_0)$, hence, $x_{k+1}\in \mathcal{L}_{f(x_0)}(f)$. Finally, since the level set of function $f$ is bounded, we conclude that the sequence of iterates generated by Algorithm \ref{rigd} is bounded.

(ii) Choose $\eta_k=\eta=\frac{1}{L}$, then from \eqref{bound f}, we get:

\begin{align*}
    \frac{\eta(1-\delta^2)}{2}\|grad(f(x_k))\|^2\leq f(x_k)-f(x_{k-1}).
\end{align*}

Now summing both sides over $k=0,\hdots,K-1$ and dividing by $K$, we obtain:

\begin{align*}
    \|grad(f(x_{k^*}))\|^2\leq \frac{2(f(x_0)-f(x^*))}{K\eta(1-\delta^2)},
\end{align*}
where ${k^*}\in\mbox{argmin}_k\{{grad(f(x_k))}\}$.
\end{proof}

\section{Adaptive Step Size via Line Search}\label{sec:line search}

In this section, we extend our inexact Riemannian gradient method to incorporate an adaptive step size strategy based on a backtracking line search. This modification ensures sufficient decrease in the objective function at each iteration, while allowing the method to adjust to local geometry and gradient accuracy without requiring prior knowledge of the smoothness constant. Unlike fixed or diminishing step size schemes, the line search mechanism dynamically selects the step size $\eta_k$ to satisfy a sufficient decrease condition, even in the presence of inexact gradient evaluations.

This approach relies on the assumption that while the gradient of the objective function is only available inexactly, the function value itself can be evaluated exactly. More specifically, at each iteration $k$, given an inexact gradient $\Delta_k$, we determine $\eta_k$ via backtracking to ensure that $x_{k+1}$ satisfies
\begin{equation}\label{eq:suff_decrease}
f(x_{k+1}) \leq f(x_k) - \sigma \eta_k \|\Delta_k\|^2,
\end{equation}
for some fixed parameter $\sigma \in (0,1)$. This condition guarantees that the algorithm makes measurable progress toward stationarity (See Algorithm \ref{alg2}). To ensure the termination of the line search, we consider a relative bound on the gradient error (also defined in Assumption \ref{assump:relative}) of the form
$$\|e_k\| \leq \delta \| {grad}(f(x_k))\|, \quad \text{for some } \delta \in [0,1),$$
ensuring that the descent direction remains sufficiently aligned with the true gradient. Under these conditions, we show that the line search terminates in finite steps and prove that the algorithm retains an $\mathcal{O}(1/K)$ convergence rate in terms of the squared norm of the Riemannian gradient.
\begin{algorithm}[htbp]
\caption{Riemannian Inexact Gradient Descent with Line Search (RiGD-LS)}\label{alg2}
\begin{algorithmic}[1]
\STATE{\bf Input:} Initial point $x_0 \in \mathcal{M}$, initial step size $\eta_0 > 0$, shrinkage factor $\beta \in (0,1)$, sufficient decrease parameter $\sigma \in (0,1)$
\FOR{$k = 0, \dots, K-1$}  
    \STATE $\Delta_k \leftarrow \arg\min_{\Delta \in \mathcal{T}_{x_k} \mathcal{M}} \left\Vert \Delta - \widetilde{grad}(f(x_k)) \right\Vert$
    \STATE  $\eta_k \leftarrow \eta_0$
    \WHILE{$f(Exp_{x_k}(-\eta_k \Delta_k)) > f(x_k) - \sigma \eta_k \|\Delta_k\|^2$}
        \STATE $\eta_k \leftarrow \beta \cdot \eta_k$
    \ENDWHILE
    \STATE $x_{k+1} \leftarrow Exp_{x_k}(-\eta_k \Delta_k)$
\ENDFOR
\end{algorithmic}
\end{algorithm}

In the next theorem, we show that the backtracking line search terminates in finitely many steps by identifying a threshold step size $\bar{\eta}$ such that the sufficient decrease condition,
    $f(x_{k+1}) \leq f(x_k) - \sigma \eta_k \|\Delta_k\|^2$,
is satisfied whenever $\eta_k \leq \bar{\eta}$. 

\begin{theorem}\label{th:terminate}(Finiteness of line search)
    Suppose Assumptions \ref{asu1} and \ref{assump:relative} hold. Let $\bar\eta := \frac{2((1-\delta)-\sigma(1+\delta)^2)}{L(1+\delta)^2}$. For any $\eta_k \in (0, \bar\eta]$, the following holds, 
    \begin{align*}
        f(x_{k+1}) &\leq f(x_k) - \sigma \eta_k \|\Delta_k\|^2. 
    \end{align*}
\end{theorem}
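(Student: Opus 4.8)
The plan is to reproduce the smoothness descent computation from the proof of Theorem~\ref{th:bounded}, but now keeping the step size $\eta_k$ symbolic rather than fixing it, and then to find the exact range of $\eta_k$ for which the resulting upper bound on $f(x_{k+1})$ is dominated by $f(x_k) - \sigma \eta_k \|\Delta_k\|^2$. Since the line search only ever shrinks $\eta_k$ (starting from $\eta_0$ and multiplying by $\beta \in (0,1)$), once we know any fixed threshold $\bar\eta > 0$ works, finiteness of the \texttt{while} loop follows because $\beta^m \eta_0 \le \bar\eta$ for all $m$ large enough.

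First I would apply Assumption~\ref{asu1} with $y = x_{k+1} = \operatorname{Exp}_{x_k}(-\eta_k \Delta_k)$, using $\operatorname{Exp}_{x_k}^{-1}(x_{k+1}) = -\eta_k \Delta_k$ and $d(x_k,x_{k+1}) = \eta_k \|\Delta_k\|$, to get
\[
f(x_{k+1}) \leq f(x_k) - \eta_k \langle \operatorname{grad} f(x_k), \Delta_k \rangle + \frac{L \eta_k^2}{2}\|\Delta_k\|^2.
\]
Next I would substitute $\Delta_k = \operatorname{grad} f(x_k) - e_k$ into the inner product term, giving $\langle \operatorname{grad} f(x_k), \Delta_k\rangle = \|\operatorname{grad} f(x_k)\|^2 - \langle \operatorname{grad} f(x_k), e_k\rangle \geq \|\operatorname{grad} f(x_k)\|^2 - \delta \|\operatorname{grad} f(x_k)\|^2 = (1-\delta)\|\operatorname{grad} f(x_k)\|^2$, where I used Cauchy--Schwarz and Assumption~\ref{assump:relative}. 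For the quadratic term I would instead bound $\|\Delta_k\|^2 \le (\|\operatorname{grad} f(x_k)\| + \|e_k\|)^2 \le (1+\delta)^2 \|\operatorname{grad} f(x_k)\|^2$. Writing $G := \|\operatorname{grad} f(x_k)\|$ for brevity, this yields
\[
f(x_{k+1}) \leq f(x_k) - \eta_k (1-\delta) G^2 + \frac{L\eta_k^2 (1+\delta)^2}{2} G^2.
\]

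Now I need to compare this with the target $f(x_k) - \sigma \eta_k \|\Delta_k\|^2$. The subtlety is that $\|\Delta_k\|^2$ appears in the target, and to certify the sufficient decrease I should upper bound $\sigma\eta_k\|\Delta_k\|^2$ by something compatible — but actually the cleanest route is to lower bound $\|\Delta_k\|^2$ as well: $\|\Delta_k\|^2 \le (1+\delta)^2 G^2$ shows $\sigma\eta_k\|\Delta_k\|^2 \le \sigma\eta_k(1+\delta)^2 G^2$, so it suffices to prove
\[
f(x_k) - \eta_k(1-\delta)G^2 + \frac{L\eta_k^2(1+\delta)^2}{2}G^2 \;\le\; f(x_k) - \sigma\eta_k(1+\delta)^2 G^2,
\]
i.e. $\frac{L\eta_k(1+\delta)^2}{2} \le (1-\delta) - \sigma(1+\delta)^2$ after dividing by $\eta_k G^2$ (the case $G=0$ being trivial since then $\Delta_k=0$). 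Solving for $\eta_k$ gives $\eta_k \le \frac{2((1-\delta) - \sigma(1+\delta)^2)}{L(1+\delta)^2}$. The main obstacle — and the one discrepancy I would need to reconcile with the stated $\bar\eta = \frac{2(1-\delta-\sigma(1-\delta)^2)}{L(1+\delta)^2}$ — is the choice of which bound on $\|\Delta_k\|^2$ to use on the \emph{target} side: bounding $\|\Delta_k\|^2 \ge (1-\delta)^2 G^2$ from below when it appears with a minus sign in $f(x_k) - \sigma\eta_k\|\Delta_k\|^2$ is what the authors evidently do, since this makes the sufficient-decrease target \emph{smaller} (harder), and it produces exactly the $(1-\delta)^2$ factor appearing in $\bar\eta$. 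So the correct bookkeeping is: use $\|\Delta_k\|^2 \le (1+\delta)^2 G^2$ for the $L\eta_k^2$ term coming from smoothness, and $\|\Delta_k\|^2 \ge (1-\delta)^2 G^2$ on the target side so that it suffices to show $f(x_k) - \eta_k(1-\delta)G^2 + \frac{L\eta_k^2(1+\delta)^2}{2}G^2 \le f(x_k) - \sigma\eta_k(1-\delta)^2 G^2$; dividing through by $\eta_k G^2$ and rearranging gives precisely $\eta_k \le \bar\eta$. Wait — that last inequality goes the wrong way, since making the target smaller makes the claim harder; I would need to instead bound $\sigma\eta_k\|\Delta_k\|^2 \ge \sigma\eta_k(1-\delta)^2 G^2$ and verify $-\eta_k(1-\delta) + \frac{L\eta_k^2(1+\delta)^2}{2} \le -\sigma\eta_k(1-\delta)^2$, which holds iff $\eta_k \le \bar\eta$. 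This is the step I would be most careful about; everything else is routine substitution, Cauchy--Schwarz, and the triangle inequality. Finally, I would note that since $\bar\eta > 0$ (guaranteeing $1-\delta-\sigma(1-\delta)^2 > 0$ requires $\sigma < 1/(1-\delta)$, which is implied by $\sigma \in (0,1)$ and $\delta \in [0,1)$) and the backtracking multiplies $\eta_k$ by $\beta$ each time, after at most $\lceil \log_\beta(\bar\eta/\eta_0) \rceil$ shrinkages the condition $\eta_k \le \bar\eta$ holds and the loop exits, so the line search is finite.
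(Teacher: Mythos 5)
Your difficulty in the final step is not a bookkeeping subtlety you resolved correctly — it is a genuine logical gap, and the resolution you ultimately settled on does not close it. Writing $G=\|\operatorname{grad} f(x_k)\|$, your smoothness bound gives $f(x_{k+1}) \le f(x_k) - \eta_k(1-\delta)G^2 + \tfrac{L}{2}\eta_k^2(1+\delta)^2G^2$. To conclude the claimed condition $f(x_{k+1}) \le f(x_k) - \sigma\eta_k\|\Delta_k\|^2$ you must dominate the \emph{smallest} possible value of that right-hand side, i.e.\ use the upper bound $\|\Delta_k\|^2 \le (1+\delta)^2G^2$ on the target side — exactly your first derivation, which yields the threshold $\eta_k \le \tfrac{2\bigl(1-\delta-\sigma(1+\delta)^2\bigr)}{L(1+\delta)^2}$ (positive only if $\sigma < (1-\delta)/(1+\delta)^2$). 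Your final version instead lower-bounds $\|\Delta_k\|^2 \ge (1-\delta)^2G^2$ and verifies $-\eta_k(1-\delta) + \tfrac{L}{2}\eta_k^2(1+\delta)^2 \le -\sigma\eta_k(1-\delta)^2$; that only establishes $f(x_{k+1}) \le f(x_k) - \sigma\eta_k(1-\delta)^2G^2$, and since $\|\Delta_k\|^2$ can be as large as $(1+\delta)^2G^2 > (1-\delta)^2G^2$, this is strictly weaker than \eqref{eq:suff_decrease}: the chain terminates at a quantity that may exceed $f(x_k)-\sigma\eta_k\|\Delta_k\|^2$. The constant is not merely suboptimal: in the Euclidean example $f(x)=\tfrac{L}{2}x^2$ with $e_k=-\delta\,\operatorname{grad} f(x_k)$ (so $\Delta_k=(1+\delta)Lx_k$), the condition \eqref{eq:suff_decrease} holds iff $\eta_k \le \tfrac{2(1-\sigma(1+\delta))}{L(1+\delta)}$, which for $\sigma>1/2$ and $\delta>0$ is smaller than the stated $\bar\eta$ (and can even be negative, e.g.\ $\sigma=0.9$, $\delta=0.5$), so the statement with the stated $\bar\eta$ fails for such parameters.

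For context in comparing with the paper: the step you adopted is precisely the step the paper's own proof takes — it bounds the target side via the reverse triangle inequality, $\|\Delta_k\|^2 \ge (1-\delta)^2 G^2$, and declares it ``suffices'' that $\eta_k(1-\delta) - \tfrac{L}{2}\eta_k^2(1+\delta)^2 \ge \sigma\eta_k(1-\delta)^2$, which produces the stated $\bar\eta$ but has the same direction-of-inequality problem. So in its final form your proposal reproduces the paper's argument, gap included, while the derivation you discarded is the one that actually closes the logical chain, at the price of a smaller $\bar\eta$ and an extra restriction on $\sigma$; alternatively, the result holds with the paper's $\bar\eta$ if the decrease test is restated as $f(x_{k+1}) \le f(x_k) - \sigma\eta_k(1-\delta)^2\|\operatorname{grad} f(x_k)\|^2$ (with the corresponding change in Algorithm \ref{alg2} and in \eqref{f bound linesearch}). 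Your closing observation that geometric shrinkage reaches any positive threshold in finitely many steps is fine and matches the paper's conclusion, but it presupposes a positive valid threshold, which is exactly what the argument above must first supply.
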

\begin{proof}From smoothness of $f$, we have that:
\begin{align*}
    f(x_{k+1}) \leq f(x_k) + \langle   {grad}(f(x_k)),  {Exp}_{x_k}^{-1}(x_{k+1}) \rangle + \frac{L}{2} d^2(x_k, x_{k+1}).
\end{align*}

Using the fact that ${Exp}_{x_k}^{-1}(x_{k+1}) = -\eta_k \Delta_k,$
$d(x_k, x_{k+1}) = \eta_k \|\Delta_k\|$,
we obtain
\begin{align*}
    f(x_{k+1}) \leq f(x_k) - \eta_k \langle   {grad}(f(x_k)), \Delta_k \rangle + \frac{L}{2} \eta_k^2 \|\Delta_k\|^2.
\end{align*}

Now, we bound the inner product. By the definition of $\Delta_k$ and Cauchy–Schwarz:
\begin{align*}
    \langle   {grad}(f(x_k)), \Delta_k \rangle 
    &= \|  {grad}(f(x_k))\|^2 - \langle   {grad}(f(x_k)), e_k \rangle \\
    &\geq \|  {grad}(f(x_k))\|^2 - \|  {grad}(f(x_k))\| \cdot \|e_k\| \\
    &\geq (1 - \delta) \|  {grad}(f(x_k))\|^2,
\end{align*}
where we used the assumption $\|e_k\| \leq \delta \|  {grad}(f(x_k))\|$ for some $\delta \in [0,1)$. 
Similarly, we bound the norm of the inexact direction:
\begin{align*}
    \|\Delta_k\| &\leq \|  {grad}(f(x_k))\| + \|e_k\| 
    \leq (1 + \delta) \|  {grad}(f(x_k))\|.
\end{align*}

Combining these bounds, we obtain:
\begin{align*}
    f(x_{k+1}) 
    \leq f(x_k) - \eta_k (1 - \delta) \|  {grad}(f(x_k))\|^2 
    + \frac{L}{2} \eta_k^2 (1 + \delta)^2 \|  {grad}(f(x_k))\|^2.
\end{align*}

Choosing $\eta_k\leq \frac{2((1-\delta)-\sigma(1+\delta)^2)}{L(1+\delta)^2}:=\bar \eta$, we can bound the right-hand-side of the above inequality as follows
\begin{align*}
    f(x_{k+1}) 
    \leq f(x_k)-\sigma\eta_k(1+\delta)^2 \|grad(f(x_k))\|^2.
\end{align*}
Moreover, using $\|e_k\|\leq \delta\|grad(f(x_k))\|$ and $\|e_k\|\geq \|\Delta_k\|-\|grad(f(x_k))\|$, we obtain that
$$f(x_{k+1})\leq f(x_k)-\sigma\eta_k\|\Delta_k\|^2,$$
which completes the proof.
\end{proof}

\begin{theorem}
    Suppose Assumptions \ref{asu1}, \ref{asu2} and \ref{assump:relative} hold. Let $\{x_k\}$ be the sequence of iterates generated by Algorithm \ref{alg2} and $\{\eta_k\}$ the sequence of step sizes generated via backtracking line search. Then, the following holds
    \begin{align*}
        \|grad(f(x_{k^{*}}))\|^2\leq \frac{f(x_0)-f(x_K)}{K\sigma\eta_{\min}\left(1-\frac{1}{\bar\alpha}+(1-\bar\alpha)\delta^2\right)},
    \end{align*}
    where $\bar\alpha \in (1, 1/\delta^2)$, $x_{k^{*}}\in\mbox{argmin}_k\{{grad(f(x_k))}\}$, and $\eta_{\min}$ is defined in Theorem \ref{th:terminate}.
\end{theorem}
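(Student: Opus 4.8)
The plan is to start from the sufficient decrease condition that Algorithm~\ref{alg2} enforces at every iteration, namely $f(x_{k+1}) \le f(x_k) - \sigma\eta_k\|\Delta_k\|^2$, and convert the right-hand side into something expressed in terms of $\|\operatorname{grad}(f(x_k))\|^2$. The natural device is Young's inequality: writing $\operatorname{grad}(f(x_k)) = \Delta_k + e_k$, we have $\|\operatorname{grad}(f(x_k))\|^2 \le (1+\tfrac{1}{\bar\alpha-1})\|\Delta_k\|^2 + \bar\alpha\|e_k\|^2$ — or some equivalent scaling with a free parameter — and then use Assumption~\ref{assump:relative} in the form $\|e_k\|^2 \le \delta^2\|\operatorname{grad}(f(x_k))\|^2$ to absorb the error term back into the gradient norm. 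Collecting the coefficient of $\|\operatorname{grad}(f(x_k))\|^2$ should produce exactly the factor $1 - \tfrac{1}{\bar\alpha} + (1-\bar\alpha)\delta^2$ appearing in the denominator of the claimed bound; the constraint $\bar\alpha \in (1, 1/\delta^2)$ is precisely what makes this factor strictly positive, since it is negative in $\bar\alpha$ and positive at $\bar\alpha=1$ (value $1-\delta^2>0$) while crossing zero somewhere before $\bar\alpha = 1/\delta^2$. I expect to have to be slightly careful matching the exact algebraic form: the most transparent route is to relate $\|\Delta_k\|^2$ to $\|\operatorname{grad}(f(x_k))\|^2$ so that $\sigma\eta_k\|\Delta_k\|^2 \ge \sigma\eta_k\bigl(1-\tfrac{1}{\bar\alpha}+(1-\bar\alpha)\delta^2\bigr)\|\operatorname{grad}(f(x_k))\|^2$ directly.

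Next I would invoke Theorem~\ref{th:terminate} to lower-bound the line-search step size by $\eta_{\min}$, which is legitimate since Assumptions~\ref{asu1} and \ref{assump:relative} hold; this replaces the iteration-dependent $\eta_k$ by the uniform constant $\eta_{\min}>0$ in the per-step decrease, giving
\begin{align*}
    f(x_{k+1}) \le f(x_k) - \sigma\eta_{\min}\Bigl(1-\tfrac{1}{\bar\alpha}+(1-\bar\alpha)\delta^2\Bigr)\|\operatorname{grad}(f(x_k))\|^2.
\end{align*}
Rearranging and summing this telescoping inequality over $k = 0,\dots,K-1$ yields
\begin{align*}
    \sigma\eta_{\min}\Bigl(1-\tfrac{1}{\bar\alpha}+(1-\bar\alpha)\delta^2\Bigr)\sum_{k=0}^{K-1}\|\operatorname{grad}(f(x_k))\|^2 \le f(x_0)-f(x_K).
\end{align*}
Finally, bounding the sum below by $K$ times its minimum term, i.e. $\sum_{k=0}^{K-1}\|\operatorname{grad}(f(x_k))\|^2 \ge K\|\operatorname{grad}(f(x_{k^*}))\|^2$ with $x_{k^*}\in\operatorname{argmin}_k\|\operatorname{grad}(f(x_k))\|$, and dividing through by $K$ and the positive constant gives the stated estimate. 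Assumption~\ref{asu2} ensures $f$ is bounded below so $f(x_0)-f(x_K)$ is a meaningful finite quantity (and in fact $f(x_K)\ge f(x^*)$ could be substituted if a bound independent of $K$ is wanted).

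The main obstacle I anticipate is purely bookkeeping rather than conceptual: getting the Young's-inequality split to land on the \emph{exact} coefficient $1-\tfrac{1}{\bar\alpha}+(1-\bar\alpha)\delta^2$ rather than a loose variant. The cleanest way is probably to note $\|\operatorname{grad}(f(x_k))\|^2 = \|\Delta_k+e_k\|^2$ and apply the parametrized inequality $\|a+b\|^2 \le \bar\alpha\|a\|^2 + \tfrac{\bar\alpha}{\bar\alpha-1}\|b\|^2$ with $a=\Delta_k$, $b=e_k$ — wait, this needs checking of which term gets which weight — then substitute $\|e_k\|^2\le\delta^2\|\operatorname{grad}(f(x_k))\|^2$, solve for $\|\Delta_k\|^2$ in terms of $\|\operatorname{grad}(f(x_k))\|^2$, and verify the resulting multiplier simplifies as claimed. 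I would also double-check that $\bar\alpha<1/\delta^2$ is exactly the condition keeping the multiplier positive (equivalently keeping the inequality nontrivial), and flag that when $\delta=0$ any $\bar\alpha>1$ works and the bound reduces to the expected $\mathcal{O}(1/K)$ rate with constant $\sigma\eta_{\min}(1-1/\bar\alpha)$, optimized as $\bar\alpha\to\infty$ toward $\sigma\eta_{\min}$.
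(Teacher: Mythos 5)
Your proposal is correct and follows essentially the same route as the paper: a Young-type split with parameter $\bar\alpha$ combined with $\|e_k\|\leq\delta\|grad(f(x_k))\|$ to get $\|\Delta_k\|^2\geq\bigl(1-\tfrac{1}{\bar\alpha}+(1-\bar\alpha)\delta^2\bigr)\|grad(f(x_k))\|^2$, then the sufficient decrease condition with $\eta_k\geq\eta_{\min}$, telescoping, and the argmin bound (the paper simply expands $\|\Delta_k\|^2=\|grad(f(x_k))-e_k\|^2$ and bounds the cross term, which is algebraically the same split). Your first-stated weighting, $\|grad(f(x_k))\|^2\leq\tfrac{\bar\alpha}{\bar\alpha-1}\|\Delta_k\|^2+\bar\alpha\|e_k\|^2$, is the right one and yields exactly the claimed coefficient via $\bigl(1-\tfrac{1}{\bar\alpha}\bigr)(1-\bar\alpha\delta^2)=1-\tfrac{1}{\bar\alpha}+(1-\bar\alpha)\delta^2$, whereas the alternative assignment you flagged for checking would not.
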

\begin{proof}
From definition of $\Delta_k$, we have that:
\begin{align*}
\|\Delta_k\|^2=\|grad(f(x_k))\|^2+\|e_k\|^2-2\langle grad(f(x_k)),e_k\rangle.
\end{align*}

Using Young's inequality we have that $2\langle grad(f(x_k)),e_k\rangle\leq \frac{1}{\bar\alpha}\|grad (f(x_k))\|^2+\bar\alpha\|e_k\|^2,$ for any $\bar\alpha>1$. Hence, we have that
\begin{align*}
\|\Delta_k\|^2\geq\left(1-\frac{1}{\bar\alpha}\right)\|grad(f(x_k))\|^2+(1-\bar\alpha)\|e_k\|^2.
\end{align*}

Using the assumption that $\|e_k\|\leq \delta \|grad(f(x_k))\|$ and $1 - \bar\alpha < 0$, we get:
\begin{align}\label{bound delta2}
    \|\Delta_k\|^2\geq \left(1-\frac{1}{\bar\alpha}+(1-\bar\alpha)\delta^2\right)\|grad(f(x_k))\|^2.
\end{align}

From the backtracking step, we have that $f(x_{k+1})\leq f(x_k)-\sigma\eta_k\|\Delta_k\|^2$, hence using \eqref{bound delta2} we get:
\begin{align}\label{f bound linesearch}
    f(x_{k+1})\leq f(x_k)-\sigma\eta_k\left(1-\frac{1}{\bar\alpha}+(1-\bar\alpha)\delta^2\right)\|grad(f(x_k))\|^2.
\end{align}

Rearranging the terms, summing \eqref{f bound linesearch} over $k=0,\hdots,K-1$, dividing both sides by $K$, and choosing $\bar\alpha\in(1,1/\delta^2)$, one can obtain:
\begin{align*}
    \frac{1}{K}\sum_{k=0}^{K-1}\|grad(f(x_k))\|^2\leq \frac{f(x_0)-f(x_K)}{K\sigma\eta_{\min}\left(1-\frac{1}{\bar\alpha}+(1-\bar\alpha)\delta^2\right)},
\end{align*}
where $\eta_{\min}$ is defined in Theorem \ref{th:terminate}. Now, choosing $x_{k^{*}}\in\mbox{argmin}_k\{{grad(f(x_k))}\}$, we get the desired result.
\end{proof}

\subsection{Boundedness of the Iterates Generated by RiGD-LS} As shown in inequality \eqref{f bound linesearch}, when $\bar{\alpha} \in (1, 1/\delta^2)$, which ensures that $(1-\frac{1}{\bar\alpha}+(1-\bar\alpha)\delta^2)$ is positive which means that $f(x_{k+1}) \leq f(x_k)$ for all $k$, hence the sequence $\{f(x_k)\}$ is not increasing. Therefore, along the same lines of our argument in the proof of Theorem \ref{th:bounded}, the iterates generated by Algorithm \ref{alg2} remain bounded.

\section{RiGD on the Grassmannian}\label{sec:grassmann}

In this section, we focus on the Grassmann manifold. The collection of all linear subspaces of dimension $p$ of $\mathbb{R}^n$ forms the Grassmann manifold denoted by $\mathrm{Gr}(p,n)$ where $p < n$ and $p,n \in \mathbb{Z^+}$. Points on the manifold are equivalence classes of $p \times n$ orthonormal matrices denoted by $[X]$. Two matrices are equivalent if they are related by right multiplication of an orthogonal $p \times p$ matrix \cite{edelman1998}. For any point $X_{p \times n}$ on the manifold and orthogonal matrix $M_{p \times p}$, the matrix product $M_{p \times p}X_{p \times n}$ is a point on the manifold since $X, MX \in [X]$. For the Grassmann manifold, the tangent space at a point $[X]$ is the set of all $p \times n$ matrices $A$ such that \[A X^\top = 0.\] 


For a given function $f(X)$ defined on the Grassmann manifold, the gradient of $f$ at $[X]$ can be calculated as \[grad(f(X)) = \frac{\delta f}{\delta X_{ij}} - XX^\top \frac{\delta f}{\delta X_{ij}},\] where $\delta f / \delta X_{ij}$ is a $p \times n$ matrix of partial derivatives of $f$ with respect to the elements of $X$. Additionally, the canonical metric for the Grassmann manifold can be defined as follows for any $p \times n$ matrices $A_1$ and $A_2$ such that $X^\top A_i = 0$ for ($i = 1,2$) \cite{edelman1998},
\begin{align*}
    g_c(A_1,A_2) = tr(A_1^\top(I - X^\top X)A_2) = tr(A_1^\top A_2).
\end{align*}

Now, the subproblem in step 3 of Algorithm \ref{rigd} and \ref{alg2} is a projection onto the tangent space of the Grassmann manifold. The projection operator onto the Grassmannian tangent space has a closed form expression. Given our estimated gradient $\widetilde{grad}$, the projection of $\widetilde{grad}$ onto the tangent space $\mathcal{T}_{x_k}\mathrm{Gr}(p,n)$ is given by
\begin{align*}
    P_{\mathcal{T}_{x_k}\mathrm{Gr}(p,n)}(\widetilde{grad}(f(x_k))) = \widetilde{grad}(f(x_k))(I - x_k^\top x_k),
\end{align*}
where $x_k$ is a point on the Grassmannian at iteration $k$, $I_{p \times p}$ denotes the identity and $\widetilde{grad}$ is the inexact gradient of $f$ at $x_k$. Another key ingredient of the algorithm is the exponential map (see Definition \ref{def1}). For a given point $x_{k}$ on the Grassmann manifold, the exponential map starting at $x_k$ in the direction of $\Delta$ is given by 
\begin{align*}
    Exp_{x_{k}}(\eta\Delta) = x_k U cos(\eta\Sigma) U^\top + V sin(\eta\Sigma) U^\top,
\end{align*}
where $U\Sigma V^\top$ is the singular value decomposition of $\Delta$. 
\begin{algorithm}[htbp]
\caption{Grassmannian Inexact Gradient Descent (GiGD)}\label{gigd}
\begin{algorithmic}[1]
\STATE{\bf Input:} Initial point $x_0 \in \mathrm{Gr}(p,n)$, step size $\eta$ 
\FOR{$k = 0, \dots, K-1$}  
\STATE  $\Delta_k \leftarrow \widetilde{grad}(f(x_k))(I - x_k^\top x_k)$ 
\STATE Compute the SVD: $\Delta_k = U_k \Sigma_k V_k^\top$
\STATE $x_{k+1} = x_k V_k cos(-\eta\Sigma_k) V_k^\top + U_k sin(-\eta\Sigma_k)V_k^\top$ 
\ENDFOR
\end{algorithmic}
\end{algorithm} 
We restate our algorithm with closed form expressions for the Grassmannian in Algorithm \ref{gigd}. 

\begin{algorithm}[htbp]
\caption{Grassmannian Inexact Gradient Descent with Line Search (GiGD-LS)}\label{gigd-ls}
\begin{algorithmic}[1]
\STATE{\bf Input:} Initial point $x_0 \in \mathrm{Gr}(p,n)$, initial step size $\eta_0 > 0$, shrinkage factor $\beta \in (0,1)$, sufficient decrease parameter $\sigma \in (0,1)$
\FOR{$k = 0, \dots, K-1$}  
    \STATE $\Delta_k \leftarrow \widetilde{grad}(f(x_k))(I - x_k^\top x_k)$
    \STATE Compute the SVD: $\Delta_k = U_k \Sigma_k V_k^\top$
    \STATE $\eta_k \leftarrow \eta_0$
    \WHILE{$f\left(x_k V_k \cos(-\eta_k \Sigma_k) V_k^\top + U_k \sin(-\eta_k \Sigma_k) V_k^\top\right) > f(x_k) - \sigma \eta_k \|\Delta_k\|^2$}
        \STATE $\eta_k \leftarrow \beta \cdot \eta_k$
    \ENDWHILE
    \STATE $x_{k+1} \leftarrow x_k V_k \cos(-\eta_k \Sigma_k) V_k^\top + U_k \sin(-\eta_k \Sigma_k) V_k^\top$
\ENDFOR
\end{algorithmic}
\end{algorithm}
Similar to the general Riemannian setting, we can incorporate a backtracking line search strategy into the Grassmannian Inexact Gradient Descent (GiGD) algorithm. The Grassmannian Inexact Gradient Descent with Line Search (GiGD-LS) algorithm, Algorithm \ref{gigd-ls}, extends the basic GiGD method by incorporating a backtracking line search strategy. At each iteration, we compute an inexact gradient and project it onto the tangent space of the Grassmann manifold. The resulting direction $\Delta_k$ is decomposed using the compact SVD to facilitate a geodesic update. The step size $\eta_k$ is adaptively selected by backtracking until a sufficient decrease condition is satisfied. The iterate is then updated using a closed-form expression for the exponential map on the Grassmannian, ensuring that each step remains on the manifold. 

Note that, the convergence guarantee for the Grassmannian variant, Algorithms \ref{gigd} and \ref{gigd-ls}, remains the same as that of the general Riemannian case, preserving the $\mathcal{O}(1/K)$ rate under same assumptions.
\section{Numerical Results on Imaging} \label{results}

\subsection{Problem formulation and bias in gradient estimates}

In this section we consider the optimization of channelized quadratic observers for binary classification discussed briefly in Section \ref{sec1}. Given $g$ an $n \times 1$ vector of measurements made by an imaging system and its channelized representation $v = Tg$, where $T$ is an $p \times n$ matrix ($p << n$), we aim to find the optimal $T$ to minimize detection error. As an optimization problem, we aim to maximize the given figure of merit by solving for the optimal channelizing matrix $T$, defined on the Grassmann manifold. This is a consequence of Eq. \eqref{inherent_grass}, where the function value must depend only on the subspace spanned by the rows of $T$ and not its specific realization. Thus, candidate solutions to our problem belong to an equivalence class making the problem inherently Grassmannian. Specifically, the problem takes the following form
\begin{align}\label{cqo_prob}
\max_{T \in \mathrm{Gr}(p,n)} J(T),
\end{align}
where $J$ is Jeffrey's divergence, also known as symmetrized Kullback-Leibler divergence. This divergence does not satisfy the triangle inequality and is thus not a metric in the true sense. However, symmetry is satisfied under this formulation. 

\begin{figure}[htbp]
    \centering
    \fbox{{\includegraphics[scale=0.5]{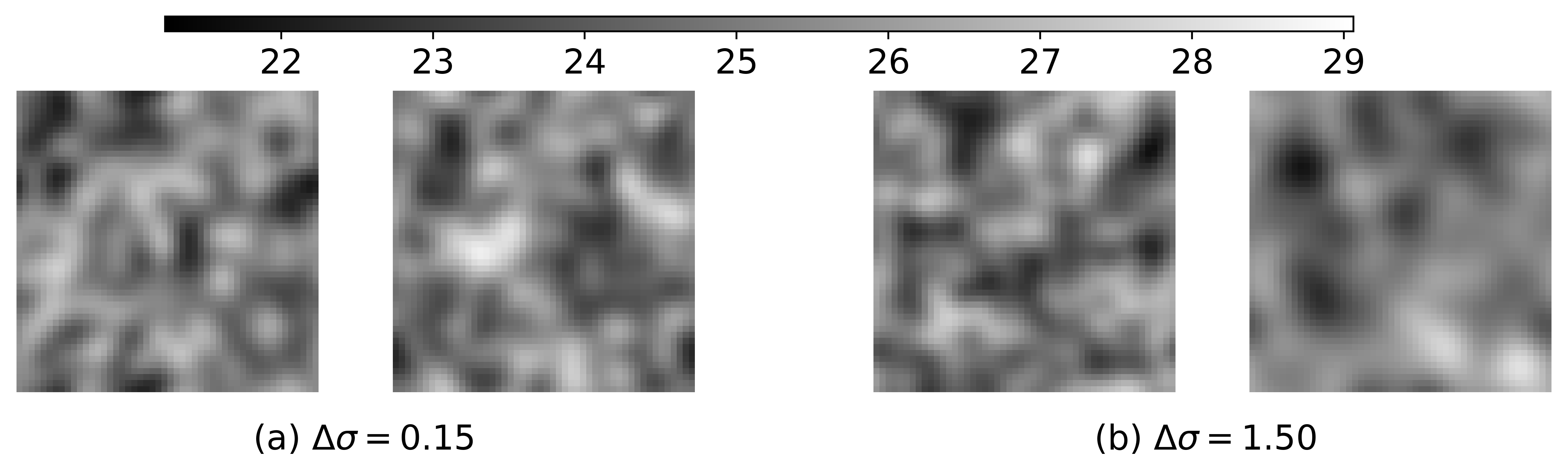}}}
    \caption{An example pair of sample images from each class. The images in (a) appear more similar due to the smaller correlation length difference, $\Delta\sigma = 0.15$ pixels, relative to those in (b)  where $\Delta\sigma = 1.50$ pixels.}
    \label{fig:sample_images}
\end{figure}

In practice, the dimension of the lexicographically ordered image, $n$, can be large, growing proportionally with the number of pixels in an image. This leads to computationally expensive matrix inversions necessary for computing pertinent figures of merit. The approach presented by the authors in \cite{kupinski_15} reduces the dimension of the problem leading to faster computation, with inversions performed on channelized matrices of sufficiently small dimension. This channelized approach ensures covariance matrix estimates are full rank in lower-dimensional space thus invertible - key to the computation of Jeffrey's divergence measure in lower-dimensional space. Now, Jeffrey's divergence $J: \mathrm{Gr}(p,n) \rightarrow \mathbb{R}_0^+$ under Gaussian statistics yields a closed form expression given by
\begin{align}\label{Jdiv}
    \nonumber J(T) &= -2L + tr(C_2^{-1}(T)C_1(T)) + s^\top T^\top C_2^{-1}(T) T s\\& \quad+ 
    tr(C_1^{-1}(T)C_2(T)) + s^\top T^\top C_1^{-1}(T) T s,  
\end{align} 
where is $s$ the difference of the means of the two images classes, $L$ a known scalar constant, $T_{p \times n}$ a point on the Grassmann manifold $\mathrm{Gr}(p,n)$, $C_i(T) = T K_i T^\top$ are the ($p \times p$) ``channelized" covariance matrices and the index $i = 1,2$ represents the class with $K_i$ representing the ($n \times n$) covariance matrices. One may view this approach as the reparameterization of the original problem of maximizing Jeffrey's divergence between two distributions in a higher dimensional space to a lower-dimensional space via the optimal matrix $T^*$ which maximizes the divergence in lower-dimensional space. Eq. \eqref{Jdiv} can take values in $[0, \infty)$ and an unconstrained maximization of this function is unbounded. However, the problem is constrained by the requirement that the rows of $T$ be linearly independent and the Grassmannian formulation of the optimization problem ensures these constraints are met. The gradient of the figure of merit from Eq. \eqref{Jdiv} is given by
\begin{equation}\label{eq5}
    \begin{split}
        grad(J(T))= C_1^{-1}(T)T(K_2 + ss^\top)[I - T^\top C_1^{-1}(T)TK_1] \\
             \quad + C_2^{-1}(T)T(K_1 + ss^\top)[I - T^\top C_2^{-1}(T)TK_2].
    \end{split}
\end{equation}

In a numerical study of heteroscedastic image data with correlation length $\Delta\sigma = \sigma_1 - \sigma_2 = 0.15$ pixels, $2500 \times 2500$ covariance matrices and $T$ with dimension $25 \times 2500$, we analyze the presence of bias in Eq. \eqref{Jdiv} and Eq. \eqref{eq5}. The covariance matrices for the sample images are chosen to be circulant with a mean pixel value of $25$. Each sample image of dimension $50 \times 50$ is generated from a multivariate Gaussian with circulant covariances using the affine transformation $X = \mu + AZ$ where $Z \sim N(0, I_n)$, $\mu = 25$ and $A$ is the matrix square root of the circulant covariance matrices. We refer the reader to \cite{kupinski_15} for additional details. Note that for a given covariance matrix $K_{n \times n}$ and sample size $N$, it is known that the sample covariance approaches the true covariance matrix at a rate of $O_p(n/\sqrt{N})$ (see \cite{puchkin2024sharperdimensionfreeboundsfrobenius}, \cite{vershynin2011introductionnonasymptoticanalysisrandom} Corollary 5.50). For fixed $n$, by the law of large numbers one can show that $|| K - \widetilde{K} ||_F = O_p(1/\sqrt{N})$ and the estimation error shrinks at the rate of $O(N^{-1/2})$. We confirm this behavior in Table \ref{tab:k1_k2_results}. From the theoretical perspective, this implies the errors in our problem are summable. 

\begin{table}[t] 
\captionsetup{width=\textwidth}
    \centering
    \caption{The Frobenius norm mean $\pm$ standard deviation compares the true and estimated covariance. As expected, both values decrease with sample size. Heteroscedastic images are simulated with $\sigma_1 = 3.0$ and $\sigma_2 = 4.5$ and equal means. $\tilde K_i$ denotes the sample covariance} 
    \label{tab:k1_k2_results} 
\begin{tabular}{rll} 
\toprule
Sample Size & $||K_1 - \widetilde{K}_1||_F$ & $||K_2 - \widetilde{K}_2||_F$ \\
\midrule
    10 & $844.9 \pm 42.7$ & $843.5 \pm 68.8$ \\
    100 & $252.3 \pm 4.5$ & $253.7 \pm 8.2$ \\
    1000 & $79.5 \pm 1.0$ & $79.9 \pm 2.0$ \\
    10000 & $25.1 \pm 0.3$ & $25.3 \pm 0.7$ \\
\bottomrule
\end{tabular}
\end{table}

When estimating the covariance matrix, gradient calculations for the figure of merit lead to an \textit{inexact gradient}. 
Covariance matrix estimation is particularly challenging under HDLSS settings. We note in Figure \ref{fig:sample_cov_jbias} (a) that the objective function Eq. \eqref{Jdiv} exhibits bias - Jeffrey's divergence for sample covariance matrices is \textit{larger} than for true covariance matrices $J(T,K) = 0.031$.  Figure \ref{fig:sample_cov_jbias} (b) plots the Frobenius norm of the difference of the gradient under true and sample covariance matrices evaluated at a random point on the Grassmannian. The element-wise mean gradient is computed and the Frobenius norm of the difference between the mean and true gradient is reported. Although the estimator is biased for finite sample sizes, the gradient approximation error decreases as the number of samples increases. Given that the norm of the difference is always positive, we can not infer the direction of the bias but since it is positive, we conclude the estimated gradient is biased.

\begin{figure*}[htbp]
    \centering
    \subfloat[Bias in J-Divergence values]{\includegraphics[width=0.49\textwidth]{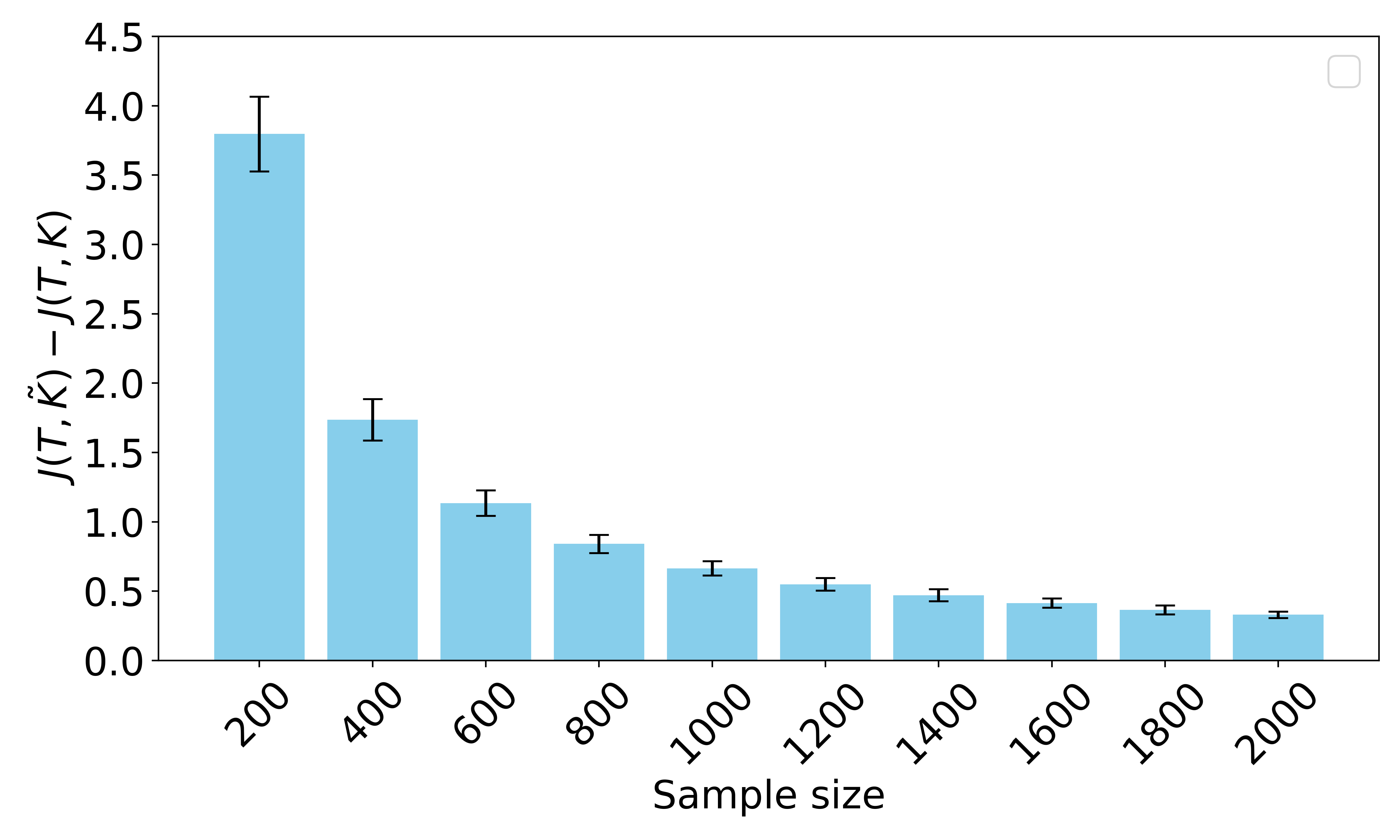}}
    \hspace{0.1mm}
    \subfloat[Gradient]{\includegraphics[width=0.48\textwidth]{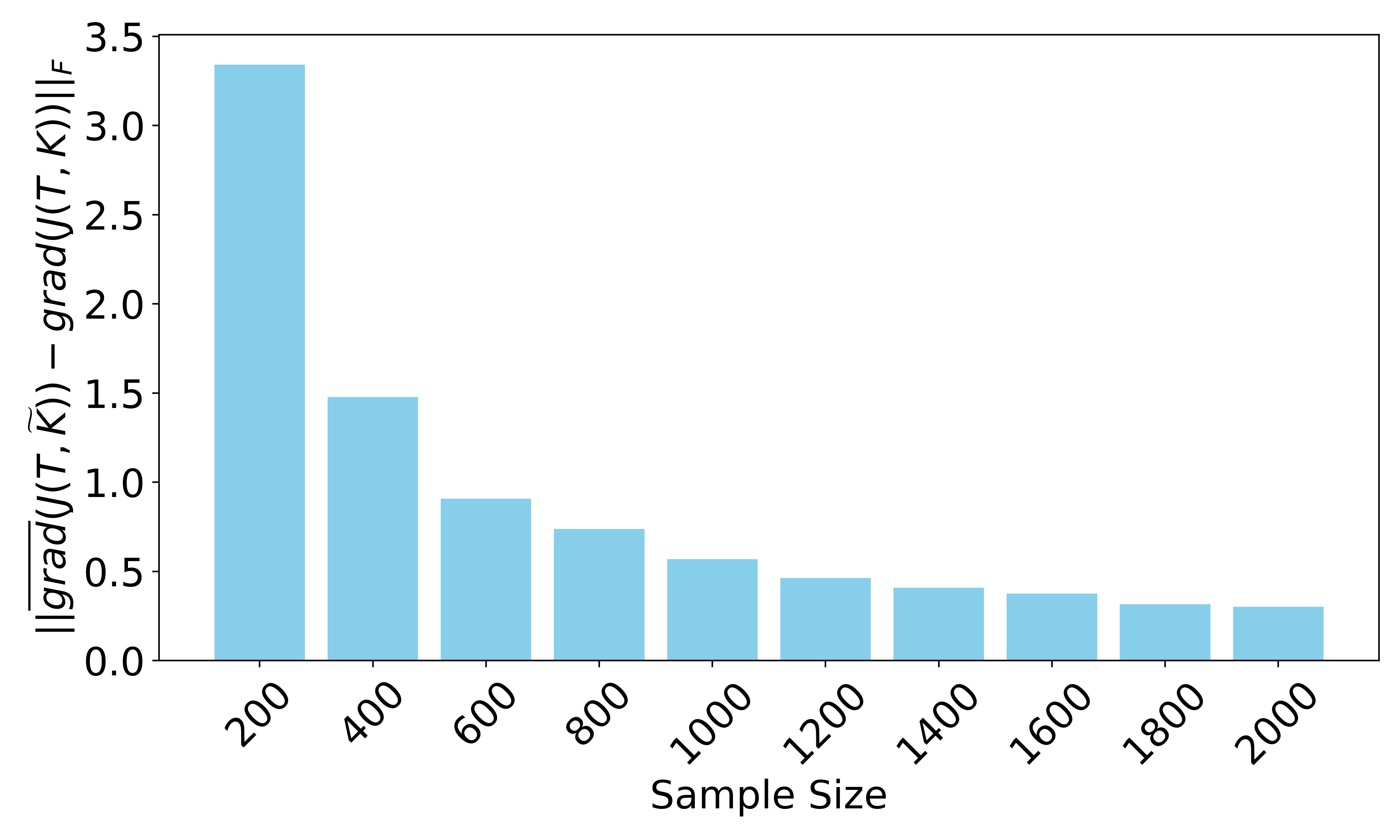}}
    
    \caption{The covariance in Eq. \eqref{cov_est} is estimated by an increasing quantity of samples to verify that both the objective function, Eq. \eqref{Jdiv}, in (a) and in (b) its gradient, Eq. \eqref{eq5}, approach the true value. In (a) the true J-divergence is subtracted from the mean J-divergence and is always positive, thus sample statistics are biased high. In (b), the Frobenius norm between the true and estimated mean gradient ($\overline{grad}$) demonstrate convergence with quantity of samples. For a given sample size, 100 replicates are used to estimate the mean (blue bar) and standard deviation (error bars).} 
    \label{fig:sample_cov_jbias}
\end{figure*}

Due to the presence of bias, traditional stochastic methods relying on unbiased estimators are not viable, necessitating the need for an inexact approach to the Grassmannian optimization problem. The proposed algorithm is a tool for solving a manifold based optimization problem with inexact gradients. Manifold-based dimensionality reduction is achieved via the decision variable $\bf{T}_{L \times M}$, a point on the Grassmannian, which reduces an $M-$dimensional covariance estimation and data discrimination problem to an $L-$dimensional one. In our numerical experiments we have $M = 2500$ and $L = 25$ - dimensionality reduction on the order of $100$ times.

\subsection{Implementation of optimization methods}

Having established the biased nature of the problem, two scenarios are analyzed using the RiGD algorithm. The first is an intentional perturbation of the covariance matrix where $U(0,1)$ random variates are added to each element of the covariance. For a given element of the covariance matrix, a $U(0,1)$ random variate (divided by the number of iterations) is added. In the limit as $k \rightarrow \infty$, the $(i,j)$th element of $K_1$ and $K_2$ will tend to its true value. At iteration $k$, the $(i,j)$th element of the covariance matrix is perturbed as 
\begin{align}\label{unif_bias}
    \hat{K}_{i,j} = K_{i,j} + \frac{Y_k}{200k},
\end{align}
where $Y_k \sim U(0,1)$. In the figures that follow, we refer to this uniform perturbation as $\hat K$ for brevity. For a finite number of iterations, the perturbed gradient will be biased. In the second scenario, RiGD is used to maximize Eq. \eqref{Jdiv} using sample covariance matrices estimated from a set of simulated images from both classes. The bias in this setting arises from covariance matrix estimation which is computed as
\begin{align*}
    \tilde{K}_{j,k} = \frac{1}{N-1} \sum_{i=1}^N (g_i[j] - \bar{g}[j])(g_i[k] - \bar{g}[k]).
\end{align*}Alternatively, in matrix-vector notation as 
\begin{align}\label{cov_est}
    \tilde{K} = \frac{1}{N-1} (G - 1\mu)^\top (G - 1\mu),
\end{align}
where $\tilde{K}_{j,k}$ is the sample covariance between the $j^{th}$ and $k^{th}$ pixel values estimated using images $g_i, \quad i \in \{1,\dots,N\}$. The $j^{th}$ pixel value of image $g_i$ is denoted by $g_i[j]$ and similarly the $k^{th}$ by $g_i[k]$. The variable $\bar{g}[j]$ is the mean of the $j^{th}$ pixel value calculated across all $N$ images. In matrix-vector notation, \textbf{G} denotes the matrix of sample images, with each row corresponding to an image, $\mu$ is the mean pixel-value vector and $1$ a $p \times 1$ vector. Note that the estimation of a covariance matrix is a non-trivial problem and there is a large body of research tackling this challenge. It is well known that when the number of available samples $N$ is smaller than the dimension of the covariance matrix $n$, the estimated matrix is singular. In \cite{Fan2013}, a simulation is used to illustrate the challenge of estimating the covariance matrix when $N$ is much smaller (half) than $n$ and also when $N$ is an order of magnitude (10 times) larger. Various regularization techniques have been proposed to estimate covariance matrices in the literature, and we refer to \cite{fan2015overviewestimationlargecovariance} for an overview of some of these techniques. To our knowledge, other manifold based methods do not account for the inexactness of gradient information. Alternatively, one may compute sample covariances for a fixed sample size and solving the optimization problem using the sample covariances. In this setting, the error does not diminish and solving the optimization can be equated to solving a different problem altogether as the sample covariances are no longer representative of the true covariance matrices. Given the difficulty in estimating the covariance matrix, we work with RiGD to accommodate bias induced by the estimation process on Eq. \eqref{Jdiv} and Eq. \eqref{eq5} and incorporate the following regularization for covariance matrix stability 
\begin{align}\label{cov_est_reg}
    \tilde{K'} = (1 - \lambda) \tilde{K} + \lambda I, 
\end{align}
where $I_{n \times n}$ is the identity matrix, $\lambda \in (0,1)$ is a regularization parameter and $\tilde{K}$ as given by Eq. \eqref{cov_est}. The eigenvalues of a sample covariance matrix tend to be biased lower for smaller (true) eigenvalues and higher for larger (true) eigenvalues. Regularization shrinks the eigenvalues towards $1$ in Eq. \eqref{cov_est_reg}. Note that we choose shrinkage towards the identity and one may shrink towards the average eigenvalue of the sample covariance. A total of $30,000$ samples are generated for the optimization results presented in Fig.(4). In the sample covariance setting, the number of samples is increased by $200$ at every iteration thus at iteration $k$, $200 * k$ samples are used. For the uniform perturbation, the error diminishes naturally as the number of iterations is increased with rate $\mathcal{O}(1/k)$ (Eq.~\eqref{unif_bias}). Lastly, we also assume true covariance information as an ideal setting for comparison and use the Steepest Descent Algorithm from the ManOpt \cite{manoptManopt} software package to maximize Eq. \eqref{Jdiv}. The $2500 \times 2500$ covariance matrices (corresponding to $50 \times 50$ pixel images) $K_1, K_2$ are calculated with correlation length $\sigma_1 - \sigma_2 = 0.55 - 0.30 = 0.25$ pixels and no signal in the mean i.e. $s = 0$ in Eq. \eqref{Jdiv} (see \cite{kupinski_15} for expressions relating these variables to the covariance matrix). The initial point is chosen such that $T_0 \in \mathrm{Gr}(25,2500)$ and is generated by a QR-factorization of a random matrix of dimension $25 \times 2500$ for all algorithms. Theoretically, one expects the Frobenius norm of the gradient to converge to $0$ at a rate of $1/K$ for RiGD. For RiGD in Figure \ref{fig:rigd_plots}, a fixed step size of $\eta = 0.2$ is chosen under uniform perturbation and $\eta = 1.5$ the sample covariance setting. Parameter initializations for the RiGD-LS are $\sigma = 10^{-4}$, $\beta = 0.7$ and $\eta_0 = 2.0$ under the sample covariance setting ($\tilde{K'}$) and $\sigma = 10^{-4}$, $\beta = 0.7$ and $\eta_0 = 0.5$ under uniform perturbation ($\hat K$). Behavior of the relative error parameter $\delta$ as a function of iterates is presented in Figure \ref{fig:rigd_relerror} for the line search variants with varying choices of the shrinkage factor $\beta$.
\begin{figure*}[t]
    \centering
    \includegraphics[width=0.7\textwidth]{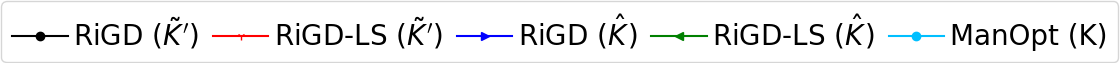}
    \subfloat[Merit function convergence]{\includegraphics[width=0.49\textwidth]{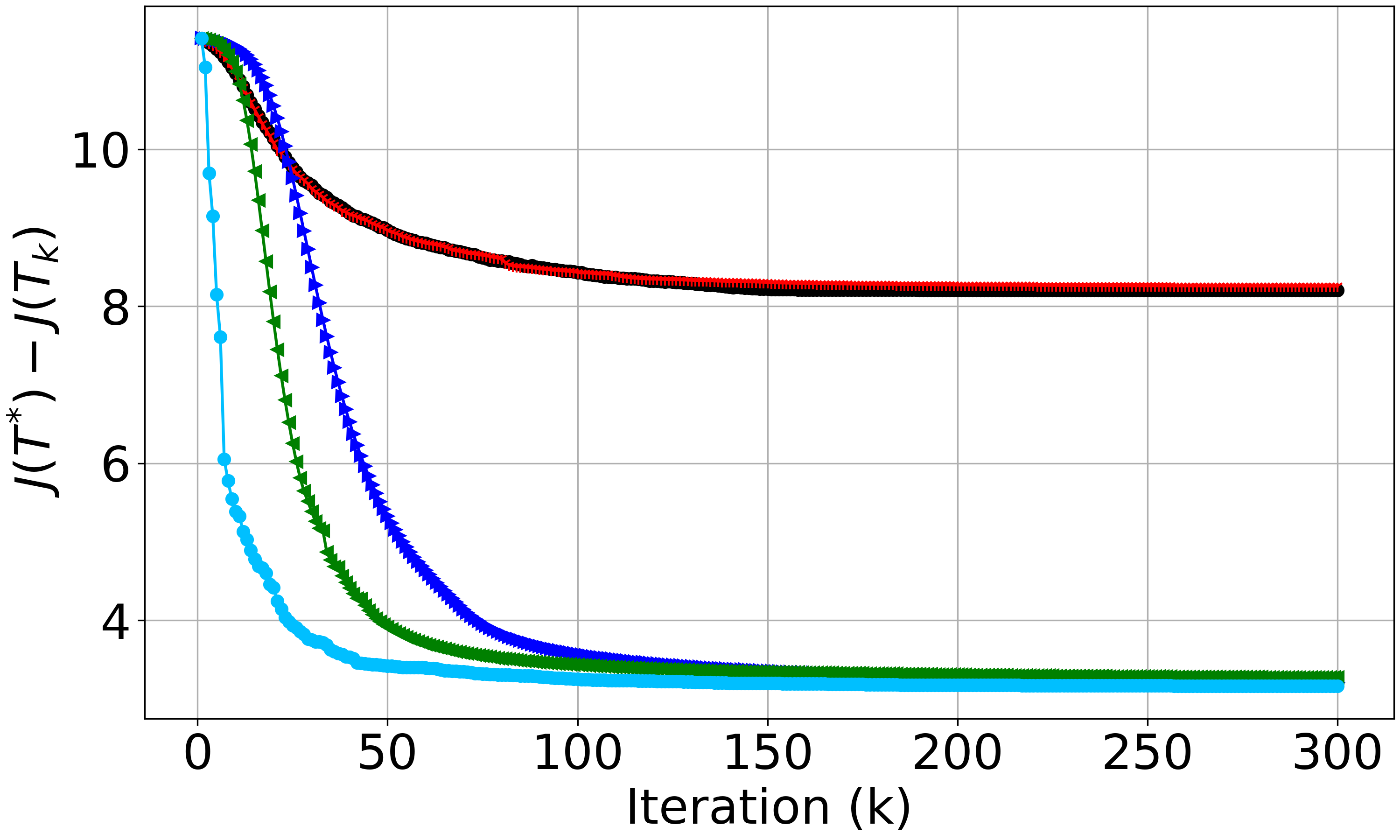}}
    \hspace{0.1mm}
    \subfloat[Gradient convergence]{\includegraphics[width=0.49\textwidth]{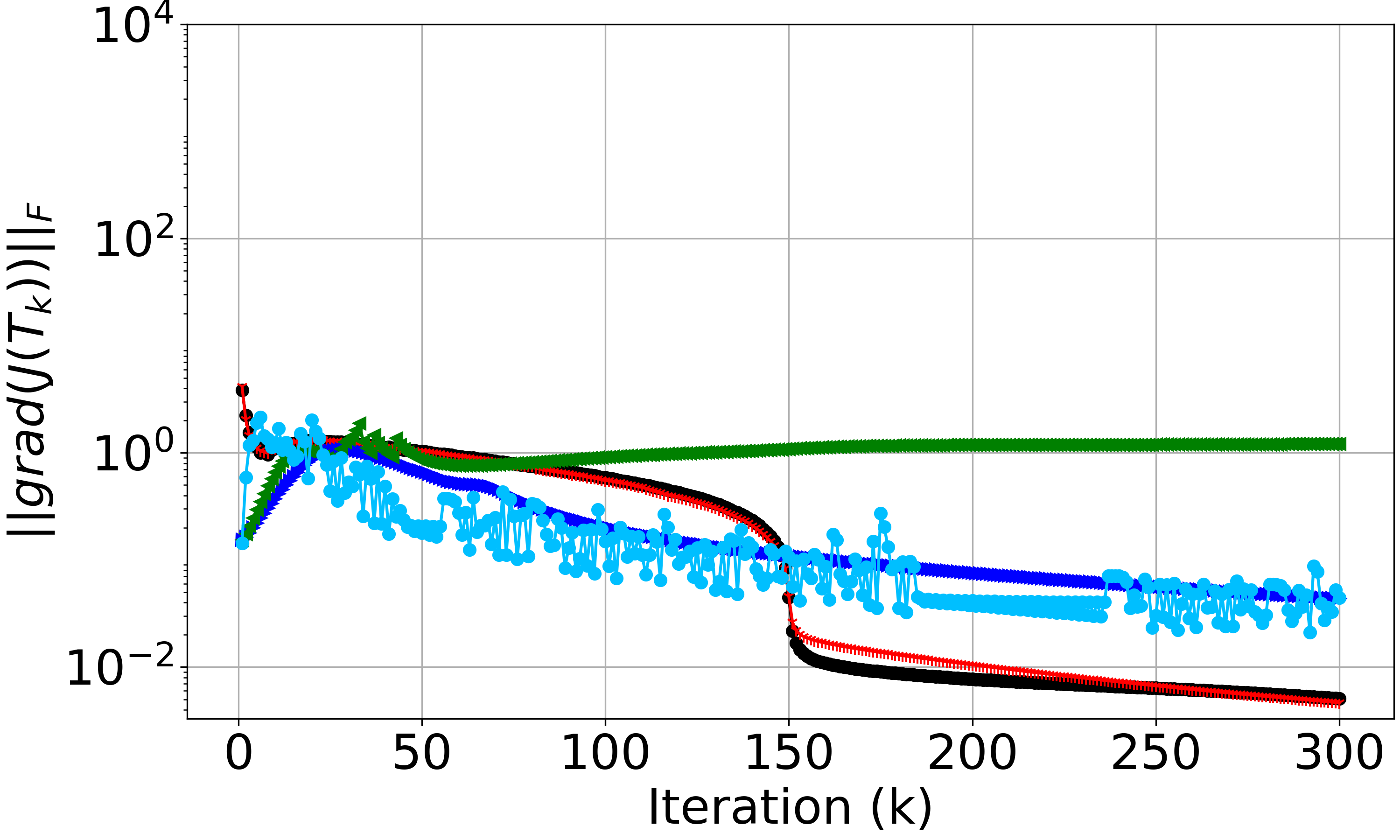}}
    
    \caption{Convergence analysis of Algorithms \ref{gigd}, \ref{gigd-ls} and ManOpt under varying perturbations to the covariance while solving \eqref{cqo_prob}. The theoretical optimal $T^*$ is the Fukunaga-Koontz which is used to compare iterative solutions in (a) where the sample covariance Eq. \eqref{cov_est_reg} with regularization ($\lambda = 0.6$) is black/red and blue/green is a uniform perturbation Eq. \eqref{unif_bias}. Here RiGD-LS is closer to optimal compared to RiGD for the uniform perturbation but not the sample covariance. The closest convergence to $T^*$ is ManOpt using the true covariance (teal). In (b) convergence of the Frobenius norm of the gradient Eq. \eqref{eq5} is presented.}
    \label{fig:rigd_plots}
\end{figure*}

Figure \ref{fig:rigd_plots} shows the convergence of the RiGD algorithm for four cases: fixed step size and linesearch with either uniformly perturbed ($\hat K$) or sample covariance matrices ($\tilde{K'}$). The teal curve in Figure \ref{fig:rigd_plots} assumes knowledge of the true covariance ($K$) although in practice, this is rarely the case and only sample estimates of covariance are available. When $s = 0$ in Eq. \eqref{Jdiv} and $K_1 \neq K_2$, it has been shown that the Fukunaga-Koontz transform is optimal and is obtained via an eigen-decomposition of $K_2^{-1}K_1$ (see \cite{fukunagakoontz} for details). In the more general setting when $s \neq 0$ and $K_1 \neq K_2$, an analytical optimal solution is not available. Figure \ref{fig:rigd_plots} (a) compares the convergence of each method in the objective function sense Eq. \eqref{Jdiv} to the Fukunaga-Koontz optimum $J(T^*)$. Under uniform perturbation, the line search variant converges faster to the optimum compared to fixed step-size and achieves a higher objective function value. However, under the sample covariance setting, convergence to the optimum is similar with the terminal objective value being marginally lower than the fixed step-size variant. When true covariance matrices are used ($K$), convergence is faster in the objective function sense. Figure \ref{fig:rigd_plots} (b) compares the convergence of the Frobenius norm of the gradient for each method. For the sample covariance setting ($\tilde K'$), the norm of the gradient diminishes at a rate of $1/K$ - our theoretical expectation. RiGD-LS converges faster as compared to the fixed step size variant and is advantageous from a user perspective since Lipschitz information is not required. Similar behavior can be observed under the uniform perturbation setting ($\hat K$).

\begin{figure}[h]
\centering
\begin{minipage}{0.48\textwidth}
\centering
\begin{tabular}{cc}
\hline
\textbf{Algorithm} & \textbf{Time (s)} \\
\hline
RiGD ($\tilde{K}'$)       & 1187.95 \\
RiGD ($\hat{K}$)          & 362.93 \\
RiGD-LS ($\tilde{K}'$)    & 1217.92 \\
RiGD-LS ($\hat{K}$)       & 456.47 \\
\hline
\end{tabular}
\captionof{table}{Computer time in seconds for the RiGD and RiGD-LS algorithms under both sample covariance and uniform perturbation settings. The computation was run on a machine with an Apple M1 Pro chip and 16.0 GB RAM.}\label{tab:computertime}
\end{minipage}
\hfill
\begin{minipage}{0.45\textwidth}
The computer time is reported for RiGD under the two different settings both with and without line search in Table \ref{tab:computertime}. In Table \ref{tab:auc_tab} the Area Under the Curve or AUC is reported for the final iterate of each algorithm and is computed using a two-alternative forced choice test on log-likelihood values (see \cite{kupinski_15} for details on the computation of the log-likelihood). An AUC of $100\%$ corresponds to perfect classification and zero detection error. When $s = 0$ in Eq. \eqref{Jdiv} and $K_1 \neq K_2$, Jeffrey's divergence and AUC are monotonically related i.e. an increase in Jeffrey's divergence corresponds to an increased ability of $T$ to discriminate between images from different classes.
\end{minipage}

\end{figure}

\begin{table}[h]
\captionsetup{width=\textwidth}
    \centering
    \caption{AUC computed for each algorithm at the final iterate on an independent sample of $5,000$ images from each class. $\tilde K'$ corresponds to sample covariance Eq. \eqref{cov_est}, $\hat K$ to uniformly perturbed covariance Eq. \eqref{unif_bias} and $K$ to true covariance matrices. FK denotes the AUC achieved by $T^*$ obtained via the Fukunaga-Koontz transform. }\label{tab:auc_tab}
\begin{tabular}{lcccccc}
\toprule
Algorithm & RiGD $(\tilde{K}')$ & RiGD-LS $(\tilde{K}')$ & RiGD ($\hat K$) & RiGD-LS ($\hat K$) \\
\midrule
AUC [\%] & 89 & 89 & 98 & 98 \\ 
\bottomrule
\end{tabular}

\vspace{0.5mm}  

\begin{tabular}{lcc}
\toprule
Algorithm & ManOpt $(K)$ & FK \\
\midrule
AUC [\%] & 97 & 99 \\
\bottomrule
\end{tabular}
\end{table}

\begin{figure}[H]
\centering

\begin{subfigure}{0.48\textwidth}
    \centering
    \includegraphics[width=\linewidth]{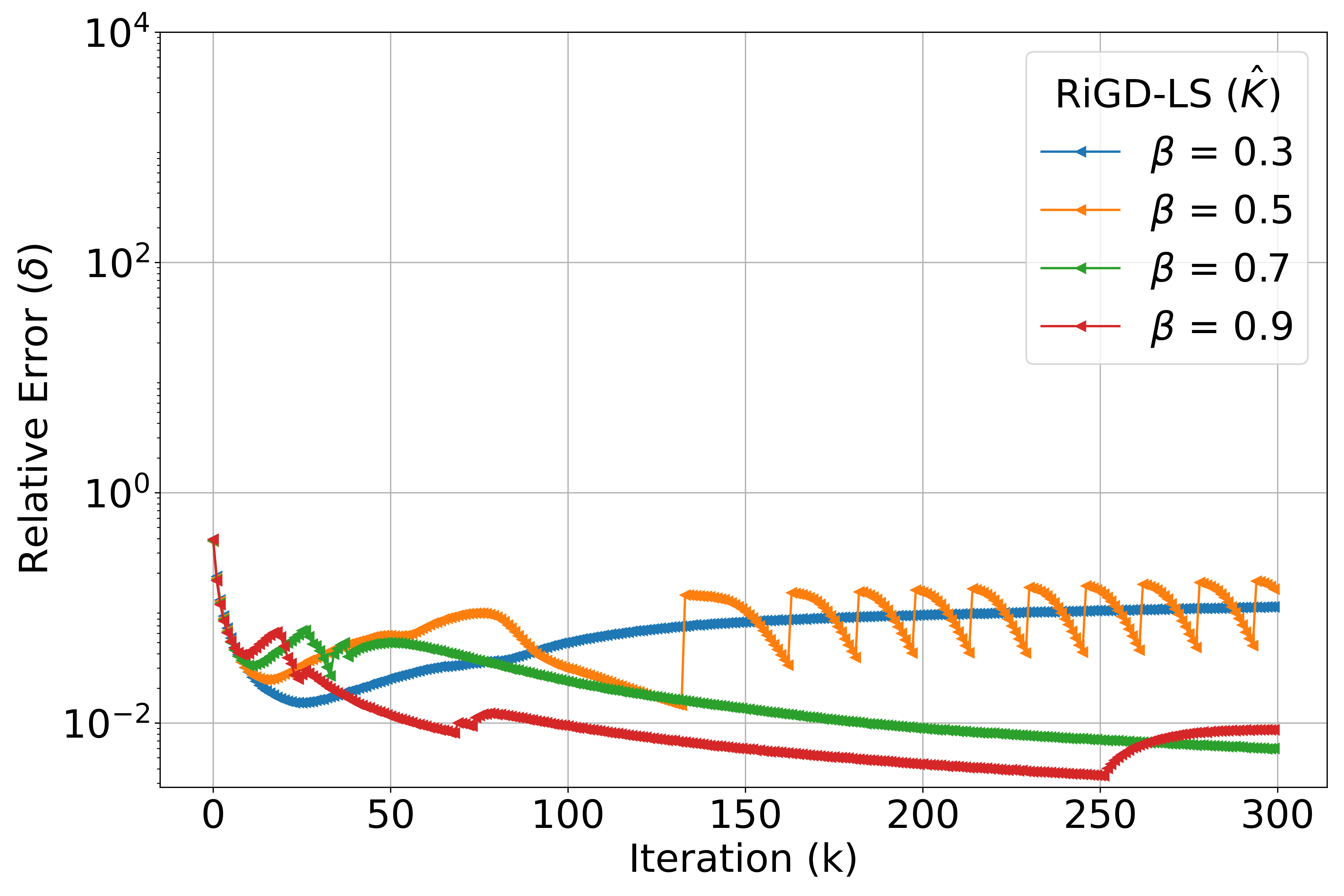}
    \caption{Uniform Perturbation}
\end{subfigure}
\hfill
\begin{subfigure}{0.48\textwidth}
    \centering
    \includegraphics[width=\linewidth]{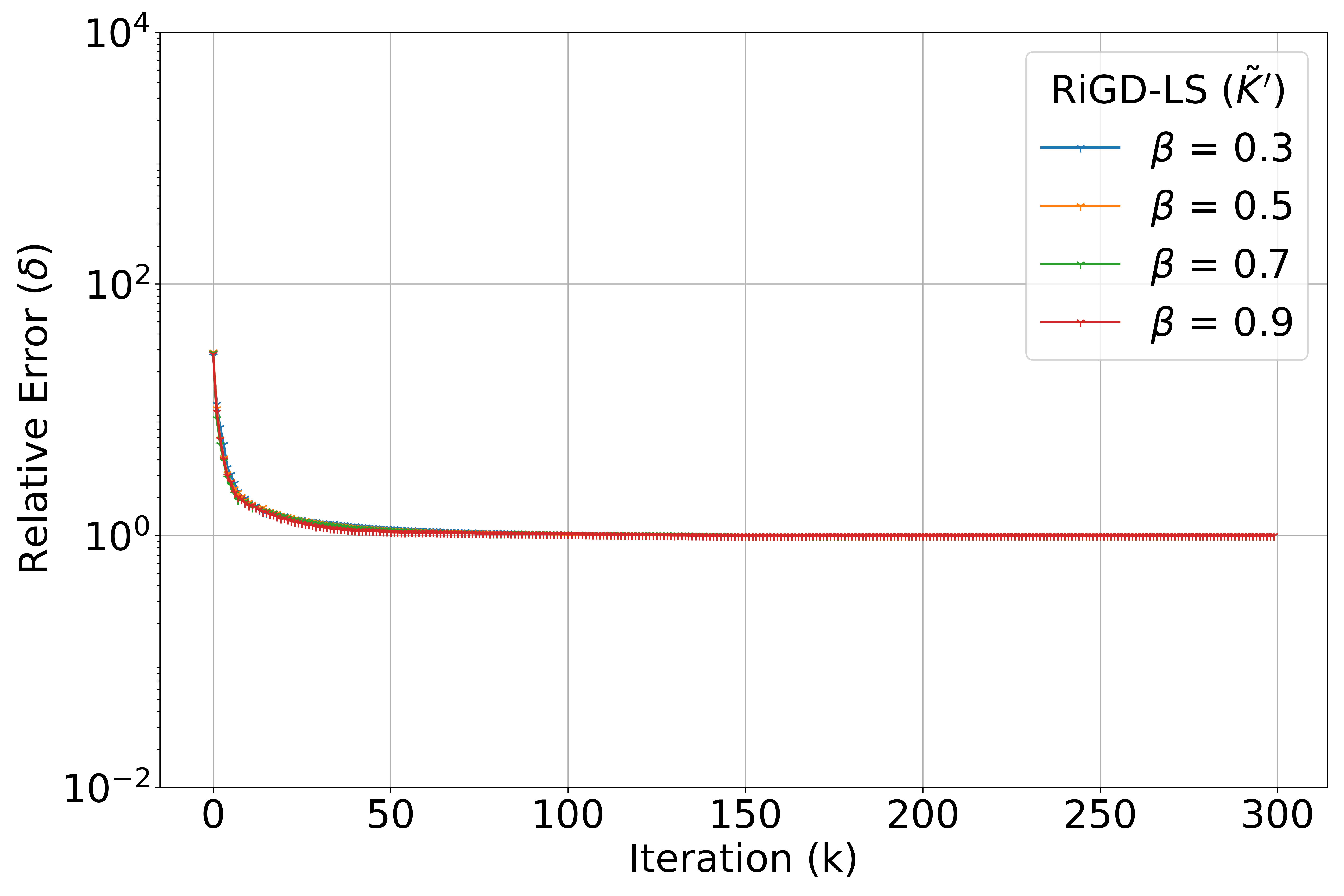}
    \caption{Sample Covariance}
\end{subfigure}
\caption{Relative error parameter $\delta$ as a function of iterates for varying choices of $\beta$, the line search parameter. In both the uniform perturbation and sample covariance settings, the parameter converges to a constant less than one.}
\label{fig:rigd_relerror}
\end{figure}

\begin{figure*}[t]
    \centering
    \fbox{
    \begin{subfigure}[b]{0.32\textwidth}
        \includegraphics[width=\linewidth]{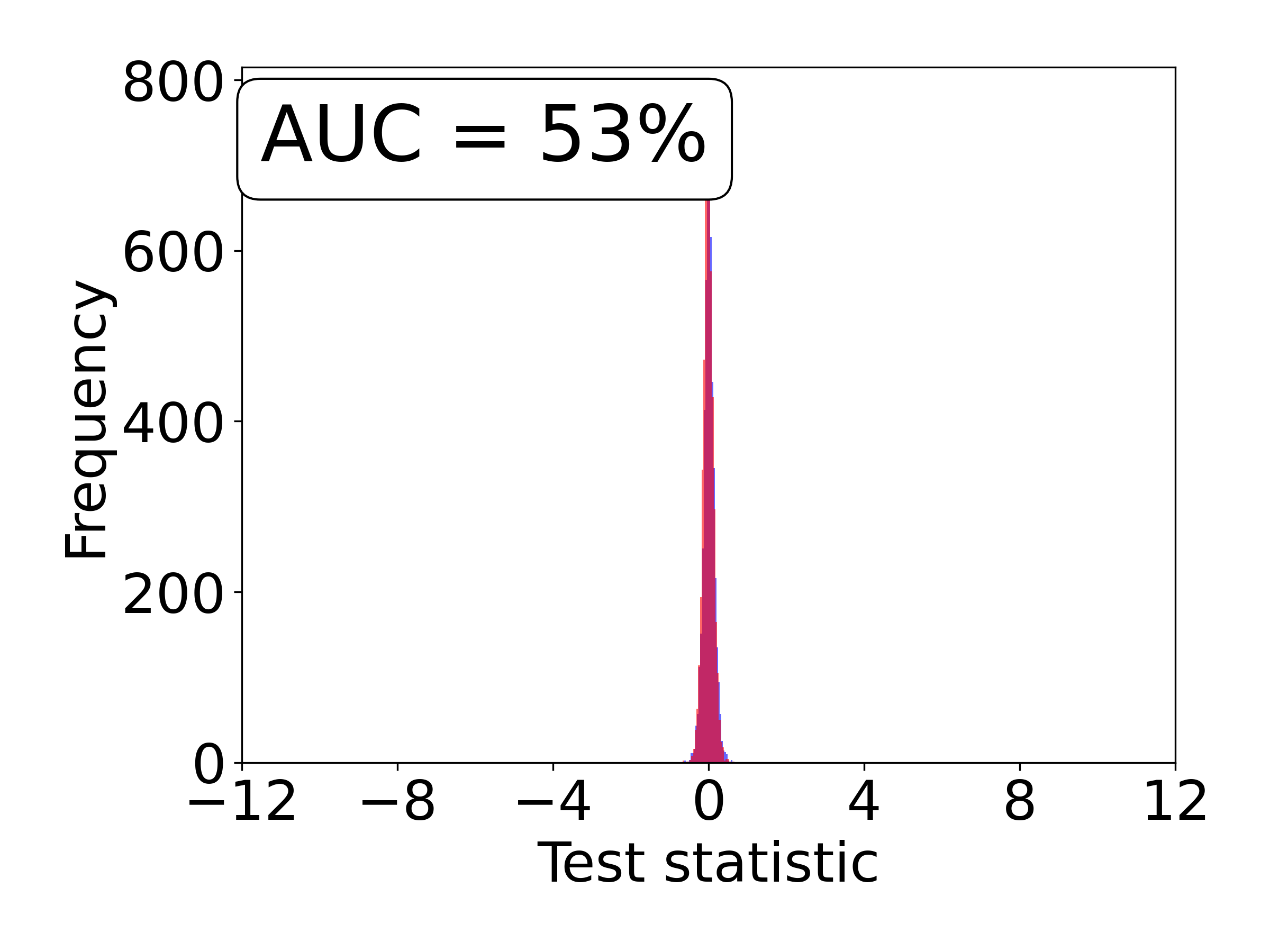}
        \caption{ManOpt: Iteration 1}
        \label{fig:exact_a}
    \end{subfigure}
    \begin{subfigure}[b]{0.32\textwidth}
        \includegraphics[width=\linewidth]{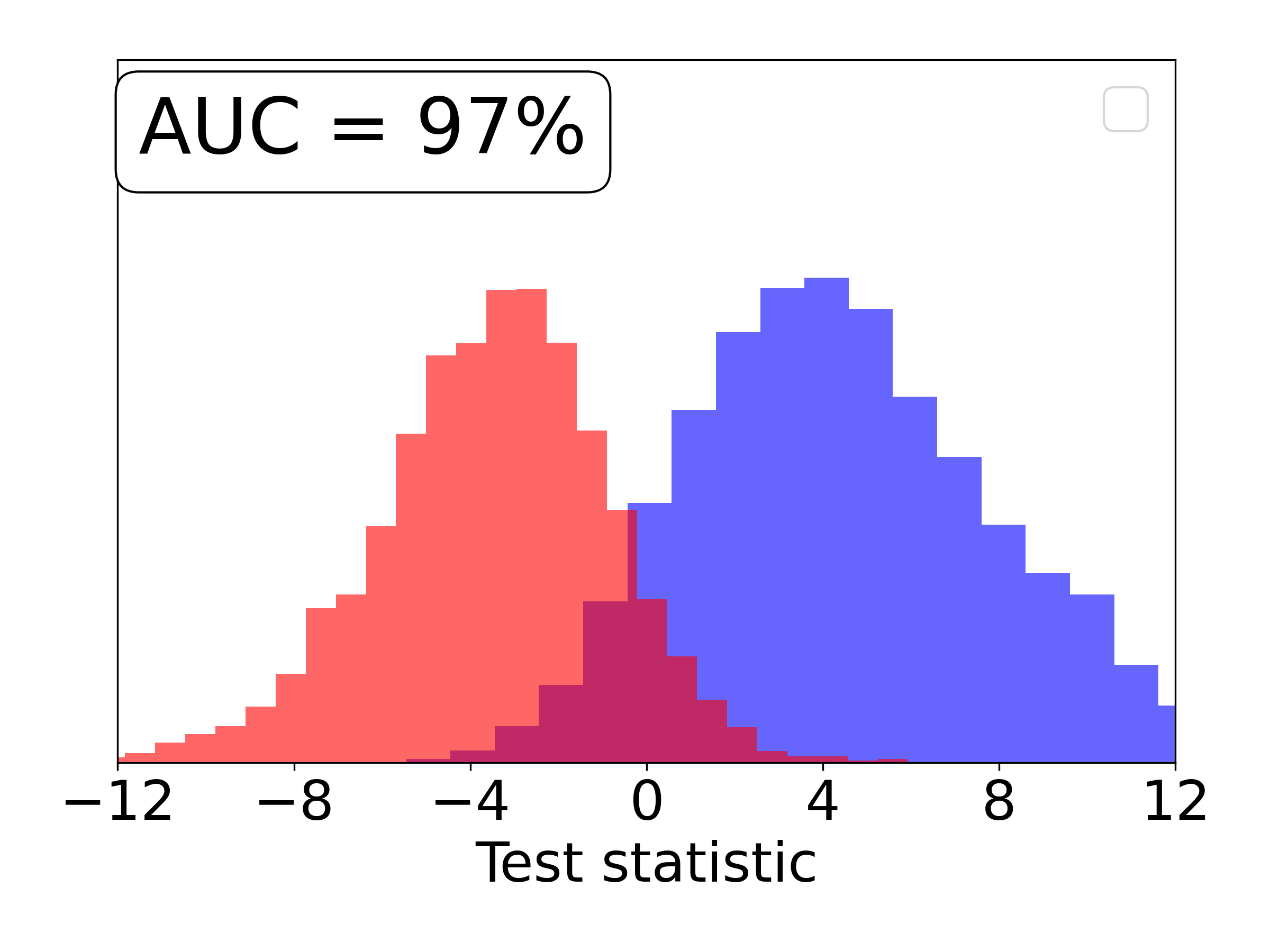}
        \caption{ManOpt: Iteration 50}
        \label{fig:exact_b}
    \end{subfigure}
    \begin{subfigure}[b]{0.32\textwidth}
        \includegraphics[width=\linewidth]{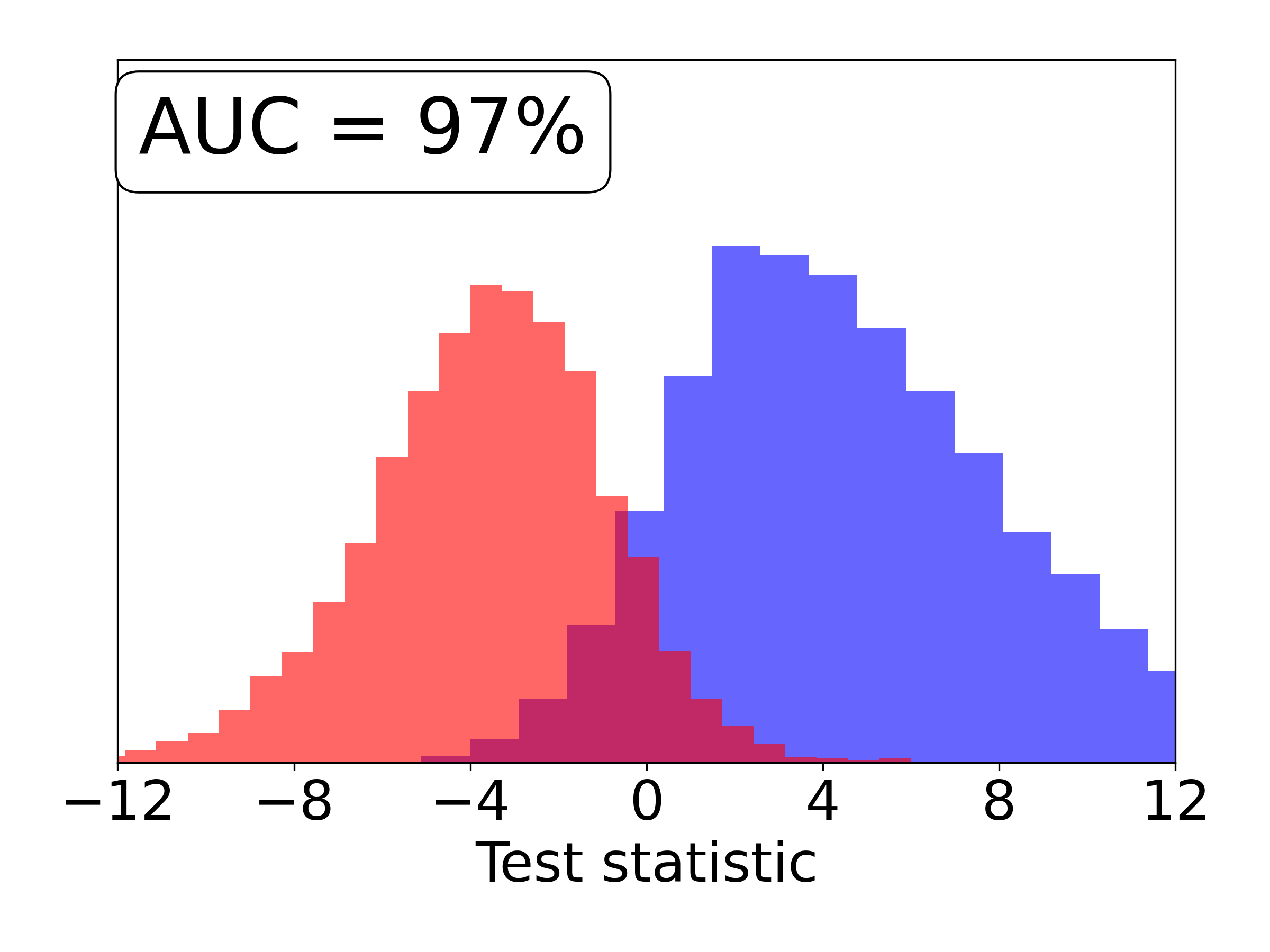}
        \caption{ManOpt: Iteration 300}
        \label{fig:exact_c}
         \end{subfigure}}\\
\fbox{\begin{subfigure}[b]{0.32\textwidth}
\includegraphics[width=\linewidth]{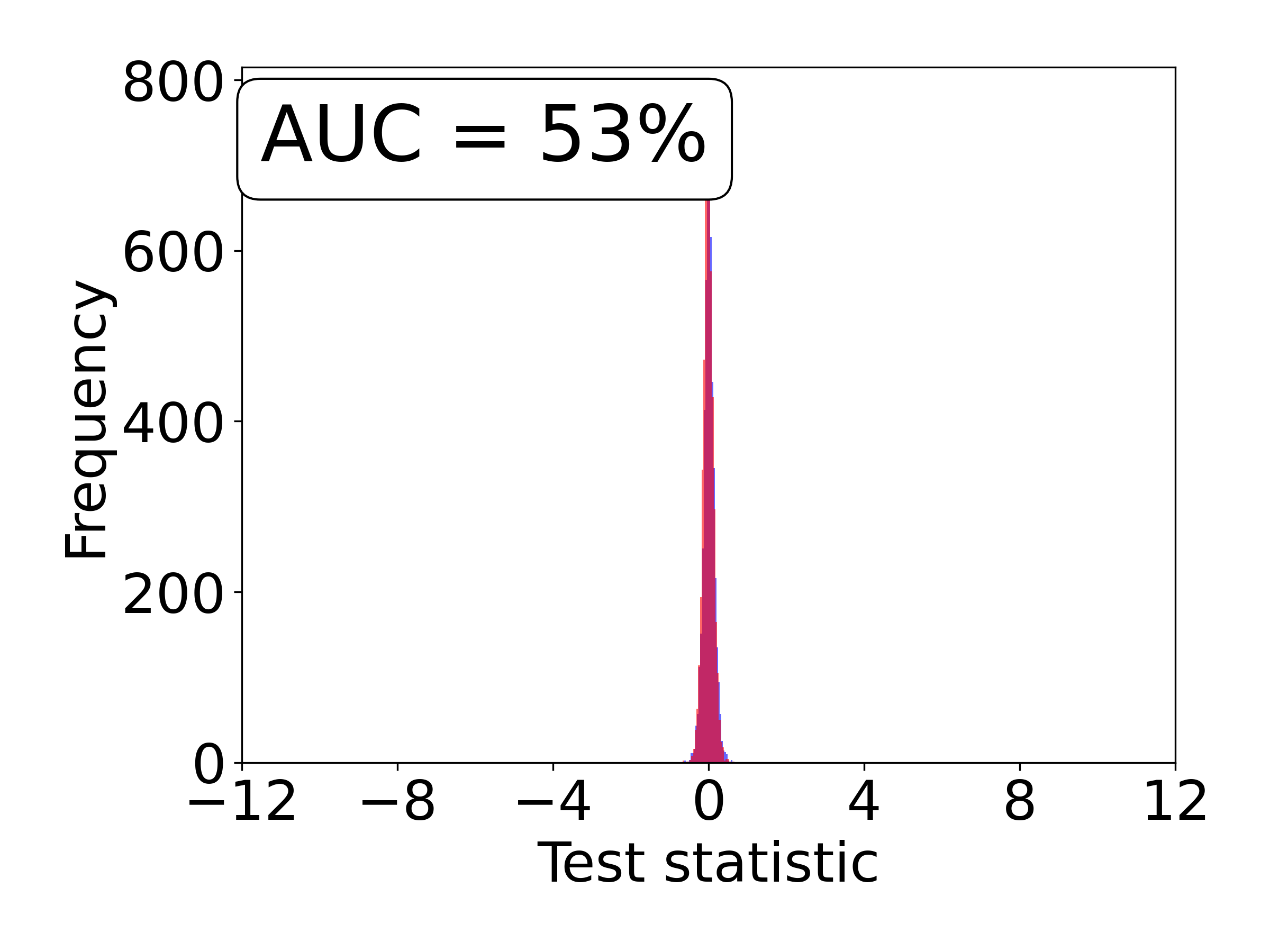}
        \caption{RiGD: Iteration 1}
        \label{fig:inexact_a}
    \end{subfigure}
\begin{subfigure}[b]{0.32\textwidth}
        \includegraphics[width=\linewidth]{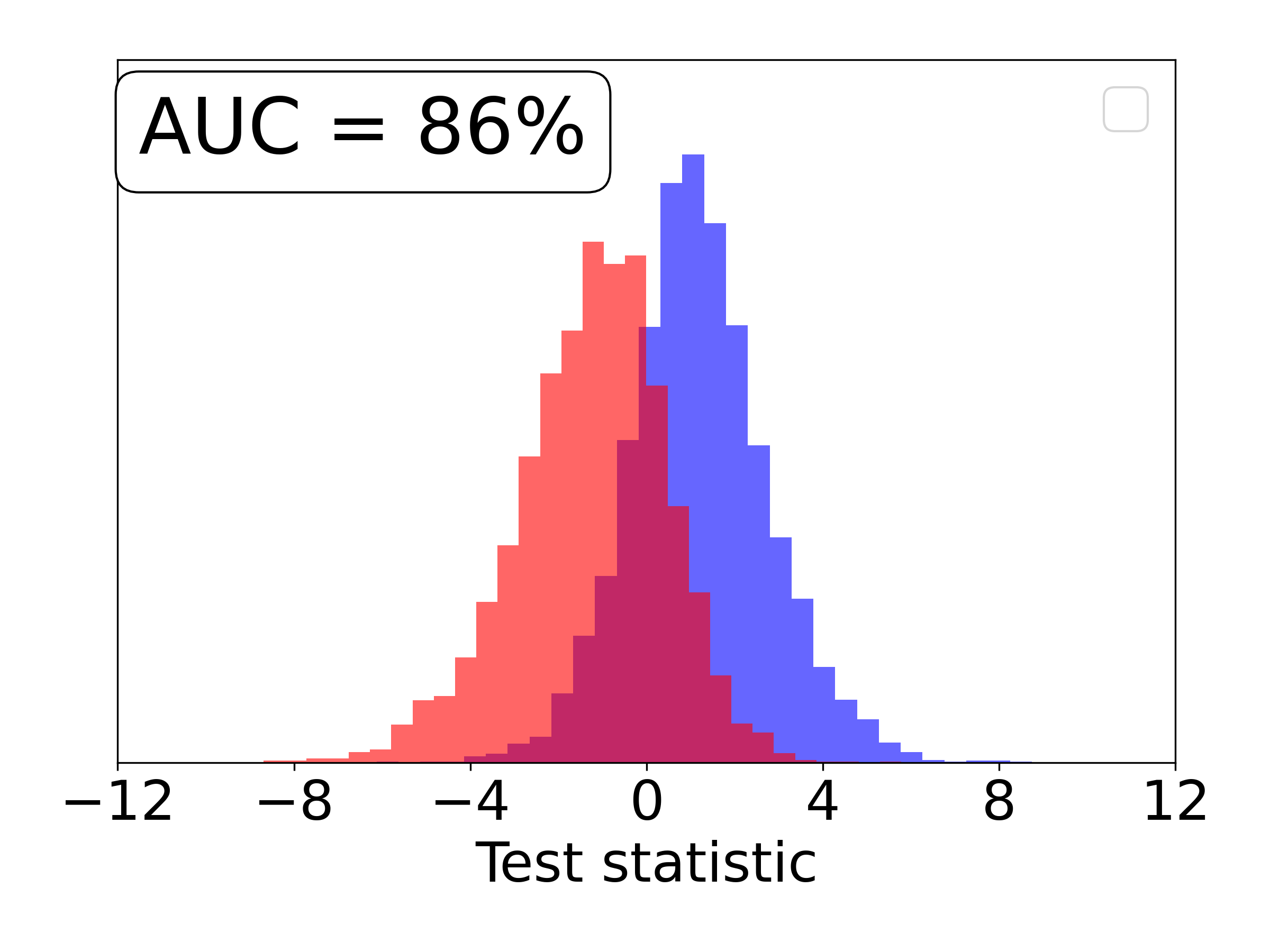}
        \caption{RiGD: Iteration 50}
        \label{fig:inexact_b}
    \end{subfigure}
\begin{subfigure}[b]{0.32\textwidth}
        \includegraphics[width=\linewidth]{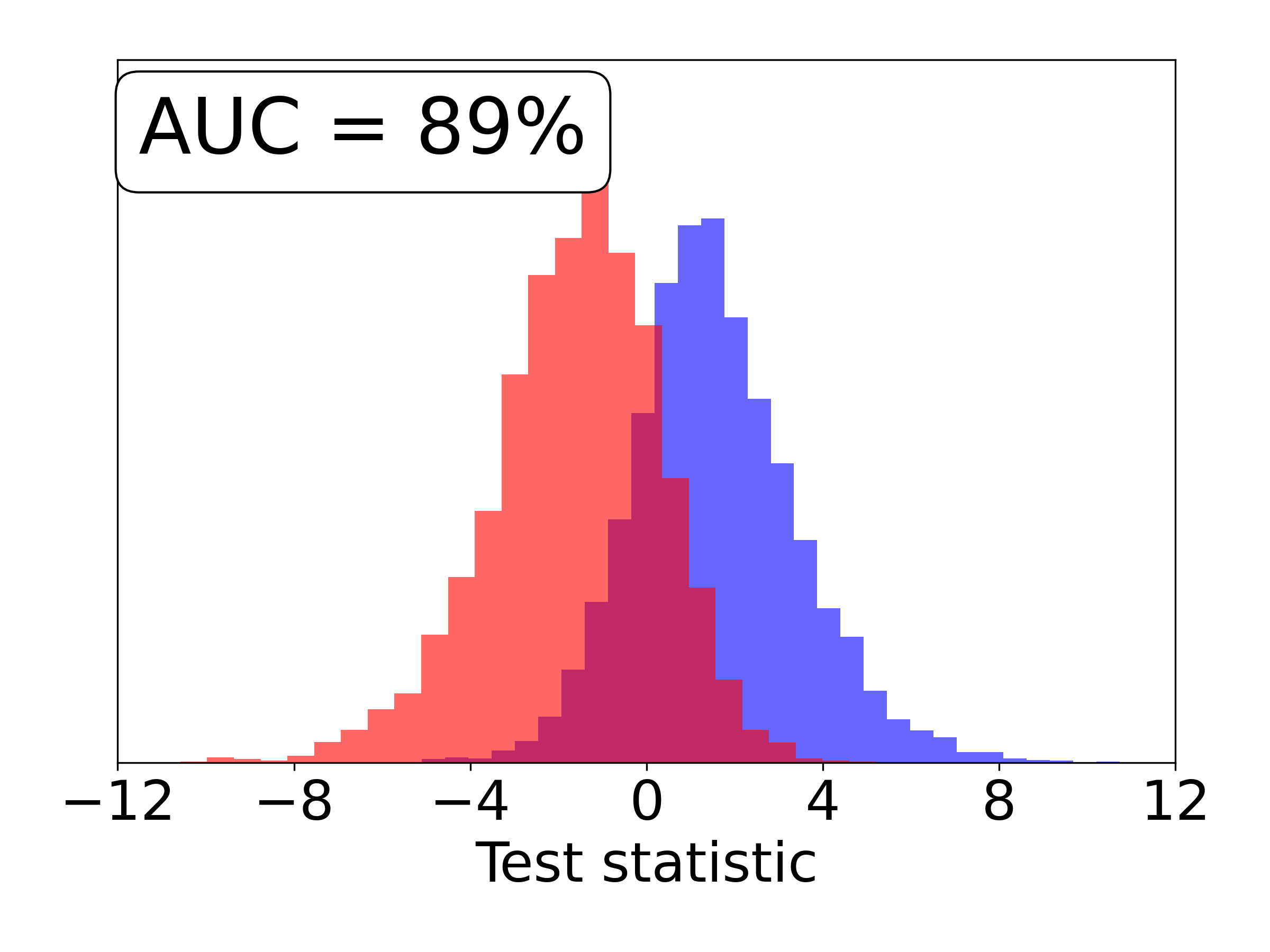}
        \caption{RiGD: Iteration 300}
        \label{fig:inexact_c}
    \end{subfigure}}
    \caption{The log-likelihood ratio from class 1 (blue) and class 2 (red) at three selected iterations. Results in (a-c, top row) are true covariances and ManOpt's Steepest Descent, and (d-f, bottom row) estimated covariances and RiGD.} 
    \label{fig:auc_plot}
\end{figure*}

For the sample covariance setting ($\tilde{K}'$) the line search variant of the RiGD achieves comparable convergence with similar AUC . On the other hand, under uniform perturbation RiGD-LS achieves the same AUC=$98\%$ for line-search and for fixed step size.  For RiGD-LS under uniform perturbation AUC=$98\%$ and for ManOpt AUC=$97\%$. This performance difference is an achievement for our algorithm considering that ManOpt assumes true covariance matrix information during optimization and RiGD works with estimated Eq. \eqref{cov_est_reg} and perturbed Eq. \eqref{unif_bias} covariance matrices.

Now, in Figure \ref{fig:auc_plot},  we visualize the detection performance of the matrix $T$ at select iterations. In Figures \ref{fig:exact_a} - \ref{fig:exact_c}, a histogram of log-likelihood values is plotted at successive iterates of the ManOpt algorithm assuming true covariance matrices are known. On the other hand, Figures \ref{fig:inexact_a} - \ref{fig:inexact_c} correspond to the iterates of RiGD using sample covariance matrices. Both algorithms are initialized at random matrices where AUC=$53\%$ computed using $5,000$ independently generated sample images. An improvement in the AUC is observed over successive iterations of both algorithms. As expected, AUC improves more rapidly when true covariance matrices are known and the ManOpt AUC is higher than RiGD at 50 and 300 iterations.

A key advantage of treating linear data reduction as an optimization problem is that the solution $T_k$ can offer insight into the correlation structure of the data. The ability to visualize this solution is key, as the rows of $T$ yield insight into the subspace that retains detection information. Thus far we have compared solutions from different algorithms according to the merit function value, gradient convergence, and AUC. Although, visualization of $T_k$ is qualitative it offers the opportunity to understand the resemblance between the correlation structure of the images and the optimized subspace solution. This visualization is complicated by the invariance condition in Eq. \eqref{inherent_grass}. Therefore, the first five eigenvectors of the ratio matrix $R_{21} = C_2^{-1}(T^*)C_1(T^*)$ are back-projected by the solutions for visual comparison. In Figure  \ref{fig:rows_viz_combined} each of the five $50 \times 50$ images correspond to a back-projected eigenvector. In practice, inspection of these solutions provides insight into the optimal linear combinations of the data for the detection task. In Figure \ref{fig:rows_viz_combined} (a) the eigenvectors are back-projected by the Fukunaga-Koontz transform $T^*$ and in Figure \ref{fig:rows_viz_combined} (b) by the final iterate of Algorithm \ref{gigd}. The image correlation structure can be visually recognized in the Fukunaga-Koontz transform which is the theoretic solution to maximize Jeffrey's divergence. Here, a combination of high and low frequency patterns at varying orientations are observed. A similar structure, although with longer correlation structure, can be seen in Figure \ref{fig:rows_viz_combined} (b) which utilizes the matrix $T_k$ computed using RiGD.  The optimal subspace learned via RiGD shows qualitative similarity to the correlation structure of the images in Figure \ref{fig:sample_images}. In practice, when the image's correlation structure is unknown the RiGD solution can be visualized to offer insight. 

\begin{figure}[h]
    \centering

    \fbox{\begin{subfigure}[t]{\textwidth}
        \centering
        \includegraphics[width=\textwidth]{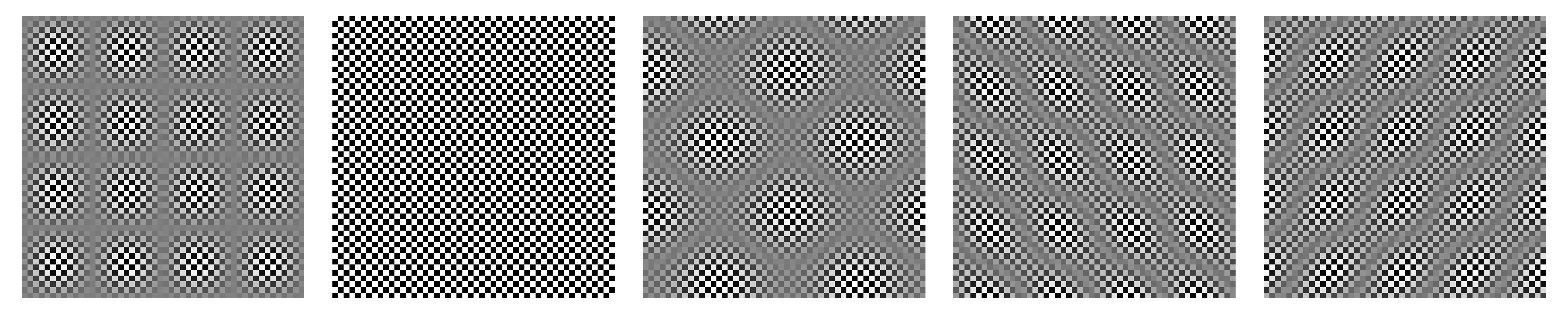}
        \caption{ $T^*$, the Fukunaga-Koontz transform.}
        \label{fig:rows_viz_teig}
    \end{subfigure}}

    \vspace{0.5cm}  

    \fbox{\begin{subfigure}[b]{\textwidth}
        \centering
        \includegraphics[width=\textwidth]{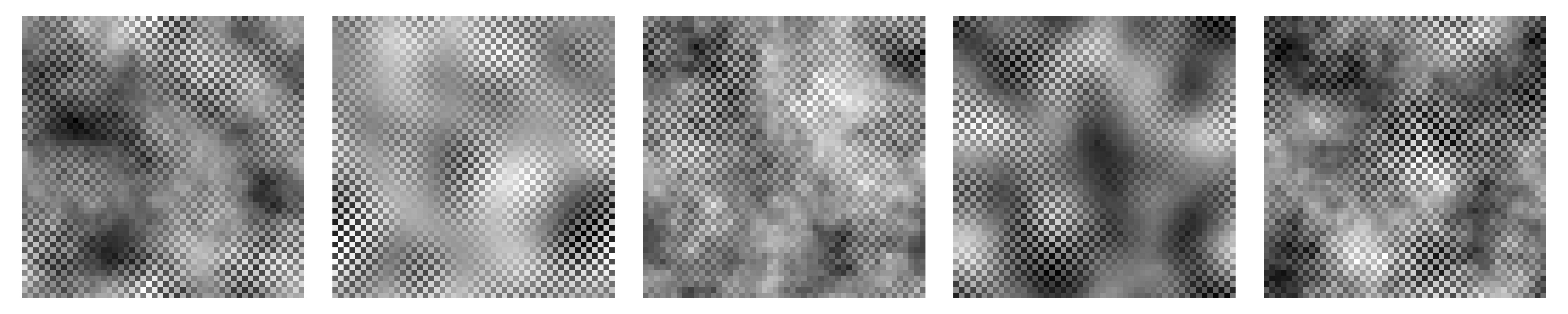}
        \caption{ RiGD $T_k$  (Algorithm \ref{gigd}) under uniform covariance perturbation (Eq. \eqref{unif_bias}).}
        \label{fig:rows_viz_rigd}
    \end{subfigure}}

    \caption{The first $5$ eigenvectors of the channelized ratio matrix $C_2^{-1}(T^*)C_1(T^*)$ back projected by (a) $T^*$ and (b) RiGD $T_k$.  This visualization of the Grassmannian solutions offers insight into the correlation structure of the images.}
    \label{fig:rows_viz_combined}
\end{figure}

\section{Conclusion}
In this work, we introduced the RiGD algorithm, a novel approach to manifold optimization that explicitly accounts for inexact gradient information. Our analysis demonstrates that RiGD achieves an O(1/K) convergence rate under standard assumptions—matching the convergence performance of methods that rely on exact gradients. We further developed a line search variant (RiGD-LS) that adapts step sizes without requiring knowledge of the Lipschitz constant, enhancing robustness and practical usability.

RiGD provides a principled and practical framework for manifold optimization in settings where gradient inexactness is inevitable, such as HDLSS imaging. Through numerical experiments of binary image classification with quadratic observers, we confirmed that RiGD remains effective even when gradients are biased due to covariance estimation or perturbations. Our approach achieved detection performance comparable to methods with access to exact statistics, while offering interpretable solutions that reflect the underlying correlation structure of the data.

\section*{Acknowledgments}

This work is supported by the Office of Naval Research (ONR) (N00014-24-1-2074) grant for Basic and Applied Scientific Research. Thank you to our colleague Eric Clarkson for his comments which improved this manuscript.

\section*{Declarations}
\noindent \small{{\bf Author contribution}\ \ The manuscript was initially written by UT and edited by MK and AJ who also provided research conditions and
guidance. The technical implementations,
such as coding algorithms and setting up reproducible environments were done by UT.
All authors read and approved the final manuscript.}\\

\noindent \small{{\bf Availability of data and materials}\ \  The basic code of this work is publicly available on OSF \hyperlink{OSF}{https://doi.org/10.17605/OSF.IO/P48AZ}.}

\nocite{*}
\bibliographystyle{plain}
\bibliography{main_revised}

@article{hall_hdlss,
 ISSN = {13697412, 14679868},
 URL = {http://www.jstor.org/stable/3647669},
 abstract = {High dimension, low sample size data are emerging in various areas of science. We find a common structure underlying many such data sets by using a non-standard type of asymptotics: the dimension tends to ∞ while the sample size is fixed. Our analysis shows a tendency for the data to lie deterministically at the vertices of a regular simplex. Essentially all the randomness in the data appears only as a random rotation of this simplex. This geometric representation is used to obtain several new statistical insights.},
 author = {Peter Hall and J. S. Marron and Amnon Neeman},
 journal = {Journal of the Royal Statistical Society. Series B (Statistical Methodology)},
 number = {3},
 pages = {427--444},
 publisher = {[Royal Statistical Society, Wiley]},
 title ="{Geometric Representation of High Dimension, Low Sample Size Data}",
 urldate = {2025-01-10},
 volume = {67},
 year = {2005}
}

@misc{edelman1998,
      title={The Geometry of Algorithms with Orthogonality Constraints}, 
      author={Alan Edelman and T. A. Arias and Steven T. Smith},
      year={1998},
      eprint={physics/9806030},
      archivePrefix={arXiv},
      primaryClass={physics.comp-ph},
      url={https://arxiv.org/abs/physics/9806030}, 
}

@BOOK{AbsMahSep2008,
 author = "P.-A. Absil and R. Mahony and R. Sepulchre",
 title = "Optimization Algorithms on Matrix Manifolds",
 publisher = "Princeton University Press",
 address = "Princeton, NJ",
 year = 2008,
 pages = "xvi+224",
 isbn = "978-0-691-13298-3",
 keywords = "optimization on manifolds, Riemannian optimization, retraction, vector transport",
}

@INPROCEEDINGS{wang_2017,
  author={Wang, Rui and Wu, Xiao-Jun},
  booktitle={2017 16th International Symposium on Distributed Computing and Applications to Business, Engineering and Science (DCABES)}, 
  title="{Structure Maintaining Discriminant Maps (SMDM) for Grassmann Manifold Dimensionality Reduction with Applications to the Image Set Classification}", 
  year={2017},
  volume={},
  number={},
  pages={105-108},
  keywords={Manifolds;Measurement;Face recognition;YouTube;Feature extraction;Grassmann Manifold;Dimensionality Reduction;Discriminant Function;Overfitting;Data Distribution},
  doi={10.1109/DCABES.2017.30}}

@inproceedings{hamm2008grassmann,
  title="{Grassmann Discriminant Analysis: A Unifying View on Subspace-Based Learning}",
  author={Hamm, Jihun and Lee, Daniel D.},
  booktitle={Proceedings of the 25th International Conference on Machine Learning (ICML)},
  year={2008},
  pages={376--383},
  organization={ACM},
  doi={https://doi.org/10.1145/1390156.1390204}
}

@article{kupinski_15,
author = {M. K. Kupinski and E. Clarkson},
journal = {J. Opt. Soc. Am. A},
keywords = {Pattern recognition; Image detection systems; Algorithms ; Biomedical imaging; Image quality; Imaging systems; Machine vision; Pattern recognition; Single pixel imaging},
number = {4},
pages = {549--565},
publisher = {Optica Publishing Group},
title = "{Method for optimizing channelized quadratic observers for binary classification of large-dimensional image datasets}",
volume = {32},
month = {Apr},
year = {2015},
url = {https://opg.optica.org/josaa/abstract.cfm?URI=josaa-32-4-549},
doi = {10.1364/JOSAA.32.000549},
abstract = {We present a new method for computing optimized channels for channelized quadratic observers (CQO) that is feasible for high-dimensional image data. The method for calculating channels is applicable in general and optimal for Gaussian distributed image data. Gradient-based algorithms for determining the channels are presented for five different information-based figures of merit (FOMs). Analytic solutions for the optimum channels for each of the five FOMs are derived for the case of equal mean data for both classes. The optimum channels for three of the FOMs under the equal mean condition are shown to be the same. This result is critical since some of the FOMs are much easier to compute. Implementing the CQO requires a set of channels and the first- and second-order statistics of channelized image data from both classes. The dimensionality reduction from M measurements to L channels is a critical advantage of CQO since estimating image statistics from channelized data requires smaller sample sizes and inverting a smaller covariance matrix is easier. In a simulation study we compare the performance of ideal and Hotelling observers to CQO. The optimal CQO channels are calculated using both eigenanalysis and a new gradient-based algorithm for maximizing Jeffrey's divergence (J). Optimal channel selection without eigenanalysis makes the J-CQO on large-dimensional image data feasible.},
}

@INPROCEEDINGS{7325797,
  author={Chepushtanova, Sofya and Kirby, Michael and Peterson, Chris and Ziegelmeier, Lori},
  booktitle={2015 IEEE International Geoscience and Remote Sensing Symposium (IGARSS)}, 
  title="{An application of persistent homology on Grassmann manifolds for the detection of signals in hyperspectral imagery}", 
  year={2015},
  volume={},
  number={},
  pages={449-452},
  keywords={Hyperspectral imaging;Manifolds;Chemicals;Three-dimensional displays;Motion pictures;Bars;Mathematics;Grassmann manifold;persistent homology;hyperspectral signal detection;subspaces;topological data analysis},
  doi={10.1109/IGARSS.2015.7325797}}

@INPROCEEDINGS{5495828,
  author={Rentmeesters, Quentin and Absil, P.-A. and Van Dooren, Paul and Gallivan, Kyle and Srivastava, Anuj},
  booktitle={2010 IEEE International Conference on Acoustics, Speech and Signal Processing}, 
  title="{An efficient particle filtering technique on the Grassmann manifold}", 
  year={2010},
  volume={},
  number={},
  pages={3838-3841},
  keywords={Filtering;Image processing;Stochastic processes;Direction of arrival estimation;Filters;Computational efficiency;Signal processing;Velocity measurement;Computational modeling;Video signal processing;time-varying subspace learning;Grassmann manifold;particle filtering},
  doi={10.1109/ICASSP.2010.5495828}}

@INPROCEEDINGS{huang_2015,
  author={Huang, Zhiwu and Wang, Ruiping and Shan, Shiguang and Chen, Xilin},
  booktitle={2015 IEEE Conference on Computer Vision and Pattern Recognition (CVPR)}, 
  title="{Projection Metric Learning on Grassmann Manifold with Application to Video based Face Recognition}", 
  year={2015},
  volume={},
  number={},
  pages={140-149},
  keywords={Manifolds;Yttrium;Face;Kernel;Hilbert space;Symmetric matrices},
  doi={10.1109/CVPR.2015.7298609}}

@ARTICLE{9062324,
  author={Sharma, Krishan and Rameshan, Renu},
  journal={IEEE Transactions on Neural Networks and Learning Systems}, 
  title="{Image Set Classification Using a Distance-Based Kernel Over Affine Grassmann Manifold}", 
  year={2021},
  volume={32},
  number={3},
  pages={1082-1095},
  keywords={Manifolds;Kernel;Geometry;Adaptation models;Task analysis;Computational modeling;Correlation;Affine Grassmann manifold (AGM);affine subspaces;Grassmann manifold (GM);kernel-gram matrix;sparse representation (SR);Stiefel manifold (SM)},
  doi={10.1109/TNNLS.2020.2980059}}

@ARTICLE{8744490,
  author={Ali, Muhammad and Gao, Junbin and Antolovich, Michael},
  journal={IEEE Transactions on Image Processing}, 
  title="{Parametric Classification of Bingham Distributions Based on Grassmann Manifolds}", 
  year={2019},
  volume={28},
  number={12},
  pages={5771-5784},
  keywords={Manifolds;Kernel;Data models;Bayes methods;Parametric statistics;Maximum likelihood estimation;Analytical models;Grassmann manifolds;Bingham parametric model;normalizing constant;saddle-point approximation (SPA);maximum likelihood estimation (MLE);classification},
  doi={10.1109/TIP.2019.2922100}}

@ARTICLE{turaga_2011,
  author={Turaga, Pavan and Veeraraghavan, Ashok and Srivastava, Anuj and Chellappa, Rama},
  journal={IEEE Transactions on Pattern Analysis and Machine Intelligence}, 
  title={Statistical Computations on Grassmann and Stiefel Manifolds for Image and Video-Based Recognition}, 
  year={2011},
  volume={33},
  number={11},
  pages={2273-2286},
  keywords={Manifolds;Geometry;Data models;Face recognition;Shape;Computational modeling;Humans;Image and video models;feature representation;statistical models;manifolds;Stiefel;Grassmann.},
  doi={10.1109/TPAMI.2011.52}}

@ARTICLE{HDLSS_1,
  author={Chadebec, Clément and Thibeau-Sutre, Elina and Burgos, Ninon and Allassonnière, Stéphanie},
  journal={IEEE Transactions on Pattern Analysis and Machine Intelligence}, 
  title={Data Augmentation in High Dimensional Low Sample Size Setting Using a Geometry-Based Variational Autoencoder}, 
  year={2023},
  volume={45},
  number={3},
  pages={2879-2896},
  keywords={Data models;Measurement;Training;Magnetic resonance imaging;Databases;Three-dimensional displays;Task analysis;Variational autoencoders;data augmentation;latent space modeling},
  doi={10.1109/TPAMI.2022.3185773}}

@ARTICLE{HDLSS_2,
  author={Sarkar, Soham and Ghosh, Anil K.},
  journal={IEEE Transactions on Pattern Analysis and Machine Intelligence}, 
  title={On Perfect Clustering of High Dimension, Low Sample Size Data}, 
  year={2020},
  volume={42},
  number={9},
  pages={2257-2272},
  keywords={Clustering algorithms;Indexes;Euclidean distance;Sociology;Statistics;Single photon emission computed tomography;Estimation;Dunn index;hierarchical clustering;high dimensional consistency;k-means clustering;pairwise distances;Rand index},
  doi={10.1109/TPAMI.2019.2912599}}

@article{HDLSS_3,
title = {Classification for high-dimension low-sample size data},
journal = {Pattern Recognition},
volume = {130},
pages = {108828},
year = {2022},
issn = {0031-3203},
doi = {https://doi.org/10.1016/j.patcog.2022.108828},
url = {https://www.sciencedirect.com/science/article/pii/S0031320322003090},
author = {Liran Shen and Meng Joo Er and Qingbo Yin},
keywords = {Binary linear classifier, Quadratic programming, Data piling, Covariance matrix},
abstract = {High-dimension and low-sample-size (HDLSS) data sets have posed great challenges to many machine learning methods. To deal with practical HDLSS problems, development of new classification techniques is highly desired. After the cause of the over-fitting phenomenon is identified, a new classification criterion for HDLSS data sets, termed tolerance similarity, is proposed to emphasize maximization of within-class variance on the premise of class separability. Leveraging on this criterion, a novel linear binary classifier, termed No-separated Data Maximum Dispersion classifier (NPDMD), is designed. The main idea of the NPDMD is to spread samples of two classes in a large interval in the respective positive or negative space along the projecting direction when the distance between the projection means for two classes is large enough. The salient features of the proposed NPDMD are: (1) The NPDMD operates well on HDLSS data sets; (2) The NPDMD solves the objective function in the entire feature space to avoid the data-piling phenomenon. (3) The NPDMD leverages on the low-rank property of the covariance matrix for HDLSS data sets to accelerate the computation speed. (4) The NPDMD is suitable for different real-word applications. (5) The NPDMD can be implemented readily using Quadratic Programming. Not only theoretical properties of the NPDMD have been derived, but also a series of evaluations have been conducted on one simulated and six real-world benchmark data sets, including face classification and mRNA classification. Experimental results and comprehensive studies demonstrate the superiority of the NPDMD in terms of correct classification rate, mean within-group correct classification rate and the area under the ROC curve.}
}

@article{roy_22_HDLSS,
  author  = {Sarbojit Roy and Soham Sarkar and Subhajit Dutta and Anil K. Ghosh},
  title   = {On Generalizations of Some Distance Based Classifiers for HDLSS Data},
  journal = {Journal of Machine Learning Research},
  year    = {2022},
  volume  = {23},
  number  = {14},
  pages   = {1--41},
  url     = {http://jmlr.org/papers/v23/20-1219.html}
}

@ARTICLE{Gabay1982-lv,
  title     = "Minimizing a differentiable function over a differential
               manifold",
  author    = "Gabay, D",
  abstract  = "To generalize the descent methods of unconstrained optimization
               to the constrained case, we define intrinsically the gradient
               field of the objective function on the constraint manifold and
               analyze descent methods along geodesics, including the gradient
               projection and reduced gradient methods for special choices of
               coordinate systems. In particular, we generalize the
               quasi-Newton methods and establish their superlinear
               convergence; we show that they only require the updating of a
               reduced size matrix. In practice, the geodesic search is
               approximated by a tangent step followed by a constraints
               restoration or by a simple arc search again followed by a
               restoration step.",
  journal   = "J. Optim. Theory Appl.",
  publisher = "Springer Science and Business Media LLC",
  volume    =  37,
  number    =  2,
  pages     = "177--219",
  month     =  jun,
  year      =  1982,
  language  = "en"
}

@Inbook{Udrişte1994,
author="Udri{\c{s}}te, Constantin",
title="Minimization of Functions on Riemannian Manifolds",
bookTitle="Convex Functions and Optimization Methods on Riemannian Manifolds",
year="1994",
publisher="Springer Netherlands",
address="Dordrecht",
pages="226--286",
isbn="978-94-015-8390-9",
doi="10.1007/978-94-015-8390-9_7"
}

@article{Bonnabel_2013,
   title="{Stochastic Gradient Descent on Riemannian Manifolds}",
   volume={58},
   ISSN={1558-2523},
   url={http://dx.doi.org/10.1109/TAC.2013.2254619},
   DOI={10.1109/tac.2013.2254619},
   number={9},
   journal={IEEE Transactions on Automatic Control},
   publisher={Institute of Electrical and Electronics Engineers (IEEE)},
   author={Bonnabel, Silvere},
   year={2013},
   month=sep, pages={2217–2229} }

@misc{weber2019nonconvex,
  title="{Projection-free nonconvex stochastic optimization on Riemannian manifolds}",
  author={Weber, Melanie and Sra, Suvrit},
  journal={IMA Journal of Numerical Analysis},
  year={2021}
}

@book{boumal2023intromanifolds, 
    title={An Introduction to Optimization on Smooth Manifolds}, 
    publisher={Cambridge University Press}, 
    author={Boumal, Nicolas}, 
    year={2023},
    address   = "Cambridge",
    doi = {10.1017/9781009166164}
}

@misc{schmidt2011convergenceratesinexactproximalgradient,
      title={Convergence Rates of Inexact Proximal-Gradient Methods for Convex Optimization}, 
      author={Mark Schmidt and Nicolas Le Roux and Francis Bach},
      year={2011},
      eprint={1109.2415},
      archivePrefix={arXiv},
      primaryClass={cs.LG},
      url={https://arxiv.org/abs/1109.2415}, 
}

@misc{tripuraneni18a,
      title={Averaging Stochastic Gradient Descent on Riemannian Manifolds}, 
      author={Nilesh Tripuraneni and Nicolas Flammarion and Francis Bach and Michael I. Jordan},
      year={2018},
      eprint={1802.09128},
      archivePrefix={arXiv},
      primaryClass={cs.LG},
      url={https://arxiv.org/abs/1802.09128}, 
}

@misc{zhang2017riemanniansvrgfaststochastic,
      title="{Riemannian SVRG: Fast Stochastic Optimization on Riemannian Manifolds}", 
      author={Hongyi Zhang and Sashank J. Reddi and Suvrit Sra},
      year={2017},
      eprint={1605.07147},
      archivePrefix={arXiv},
      primaryClass={math.OC},
      url={https://arxiv.org/abs/1605.07147}, 
}

@Inbook{Hosseini2020,
author="Hosseini, Reshad
and Sra, Suvrit",
editor="Grohs, Philipp
and Holler, Martin
and Weinmann, Andreas",
title="Recent Advances in Stochastic Riemannian Optimization",
bookTitle="Handbook of Variational Methods for Nonlinear Geometric Data",
year="2020",
publisher="Springer International Publishing",
address="Cham",
pages="527--554",
abstract="Stochastic and finite-sum optimization problems are central to machine learning. Numerous specializations of these problems involve nonlinear constraints where the parameters of interest lie on a manifold. Consequently, stochastic manifold optimization algorithms have recently witnessed rapid growth, also in part due to their computational performance. This chapter outlines numerous stochastic optimization algorithms on manifolds, ranging from the basic stochastic gradient method to more advanced variance reduced stochastic methods. In particular, we present a unified summary of convergence results. Finally, we also provide several basic examples of these methods to machine learning problems, including learning parameters of Gaussians mixtures, principal component analysis, and Wasserstein barycenters.",
isbn="978-3-030-31351-7",
doi="10.1007/978-3-030-31351-7"
}

@article{Kupinski:16,
author = {Meredith K. Kupinski and Eric Clarkson},
journal = {J. Opt. Soc. Am. A},
keywords = {Pattern recognition; Image detection systems; Algorithms ; Biomedical imaging; Fourier transforms; Imaging systems; Single pixel imaging; Stochastic processes; Systems design},
number = {6},
pages = {1214--1225},
publisher = {Optica Publishing Group},
title = "{Optimal channels for channelized quadratic estimators}",
volume = {33},
month = {Jun},
year = {2016},
url = {https://opg.optica.org/josaa/abstract.cfm?URI=josaa-33-6-1214},
doi = {10.1364/JOSAA.33.001214},
abstract = {We present a new method for computing optimized channels for estimation tasks that is feasible for high-dimensional image data. Maximum-likelihood (ML) parameter estimates are challenging to compute from high-dimensional likelihoods. The dimensionality reduction from M measurements to L channels is a critical advantage of channelized quadratic estimators (CQEs), since estimating likelihood moments from channelized data requires smaller sample sizes and inverting a smaller covariance matrix is easier. The channelized likelihood is then used to form ML estimates of the parameter(s). In this work we choose an imaging example in which the second-order statistics of the image data depend upon the parameter of interest: the correlation length. Correlation lengths are used to approximate background textures in many imaging applications, and in these cases an estimate of the correlation length is useful for pre-whitening. In a simulation study we compare the estimation performance, as measured by the root-mean-squared error (RMSE), of correlation length estimates from CQE and power spectral density (PSD) distribution fitting. To abide by the assumptions of the PSD method we simulate an ergodic, isotropic, stationary, and zero-mean random process. These assumptions are not part of the CQE formalism. The CQE method assumes a Gaussian channelized likelihood that can be a valid for non-Gaussian image data, since the channel outputs are formed from weighted sums of the image elements. We have shown that, for three or more channels, the RMSE of CQE estimates of correlation length is lower than conventional PSD estimates. We also show that computing CQE by using a standard nonlinear optimization method produces channels that yield RMSE within 2\% of the analytic optimum. CQE estimates of anisotropic correlation length estimation are reported to demonstrate this technique on a two-parameter estimation problem.},
}

@inproceedings{Kupinski:19,
author = {M. Kupinski and M. Boffety and R. Ossikovski and A. Pierangelo and J. Rehbinder and J. Vizet and F. Goudail and T. Novikova},
booktitle = {Novel Biophotonics Techniques and Applications V},
journal = {Novel Biophotonics Techniques and Applications V},
keywords = {CCD cameras; Diffuse optical spectroscopy; Mathematical methods; Mueller matrices; Optical coherence tomography; Polarimetric imaging},
pages = {11075-8},
publisher = {Optica Publishing Group},
title = "{Diagnostics of high grade cervical intraepithelial neoplasia with Mueller matrix polarimetry}",
year = {2019},
doi = {10.1117/12.2527117},
address = {Munich, Germany},
abstract = {An enhancement of contrast between healthy and neoplastic zones in Mueller matrix images of excised cervical tissue was demonstrated in our prior work for the visible wavelength range. In this paper we present the statistical analysis of Mueller polarimetric data for the diagnostics of high grade cervical intraepithelial neoplasia. The results of linear and non-linear post-processing compressions of the full Mueller matrix are discussed and compared in terms of diagnostic performance. The final goal of these studies is to estimate and compare the diagnostic usefulness of 16 polarimetric measurements required for the reconstruction of complete Mueller matrix of a sample, while looking for an optimal design of future imaging protocols.},
}

@article{Kupinski2018,
  title = "{Polarimetric measurement utility for pre-cancer detection from uterine cervix specimens}",
  volume = {9},
  ISSN = {2156-7085},
  url = {http://dx.doi.org/10.1364/boe.9.005691},
  DOI = {10.1364/boe.9.005691},
  number = {11},
  journal = {Biomedical Optics Express},
  publisher = {Optica Publishing Group},
  author = {Kupinski,  Meredith and Boffety,  Matthieu and Goudail,  Fran\c{c}ois and Ossikovski,  Razvigor and Pierangelo,  Angelo and Rehbinder,  Jean and Vizet,  Jérémy and Novikova,  Tatiana},
  year = {2018},
  month = oct,
  pages = {5691}
}

@article{Kupinski:17,
author = {Meredith K. Kupinski and Jaden Bankhead and Adriana Stohn and Russell Chipman},
journal = {J. Opt. Soc. Am. A},
keywords = {Pattern recognition; Image detection systems; Algorithms ; Biomedical imaging; Matrix methods; Mueller matrices; Optical elements; Polarimetric imaging; Transforms},
number = {6},
pages = {983--990},
publisher = {Optica Publishing Group},
title = "{Binary classification of Mueller matrix images from an optimization of Poincar\'{e} coordinates}",
volume = {34},
month = {Jun},
year = {2017},
url = {https://opg.optica.org/josaa/abstract.cfm?URI=josaa-34-6-983},
doi = {10.1364/JOSAA.34.000983},
abstract = {A new binary classification method for Mueller matrix images is presented which optimizes the polarization state analyzer (PSA) and the polarization state generator (PSG) using a statistical divergence between pixel values in two regions of an image. This optimization generalizes to multiple PSA/PSG pairs so that the classification performance as a function of number of polarimetric measurements can be considered. Optimizing PSA/PSG pairs gives insight into which polarimetric measurements are most useful for the binary classification. For example, in scenes with strong diattenuation, retardance, or depolarization certain PSA/PSG pairs would make two regions in an image look very similar and other pairs would make the regions look very different. The method presented in this paper provides a quantitative method for ensuring the images acquired can be classified optimally.},
}

@INPROCEEDINGS{KupinskiIEEE,
  author={Kupinski, Meredith and Rouse, Andrew and Gmitro, Arthur},
  booktitle={2016 IEEE Nuclear Science Symposium, Medical Imaging Conference and Room-Temperature Semiconductor Detector Workshop (NSS/MIC/RTSD)}, 
  title="{Applying J-optimal channelized quadratic observers (J-CQO) to a clinical imaging study for ovarian cancer detection}", 
  year={2016},
  volume={},
  number={},
  pages={1-4},
  keywords={Observers;Cancer;Imaging;Optics;Distributed databases;Manifolds;In vivo},
  doi={10.1109/NSSMIC.2016.8069563}}

@inproceedings{Kupinski2016-ak,
author = {Meredith K. Kupinski and Eric Clarkson and Michael Ghaly and Eric C. Frey},
title = {Applying the J-optimal channelized quadratic observer to SPECT myocardial perfusion defect detection},
volume = {9787},
booktitle = {Medical Imaging 2016: Image Perception, Observer Performance, and Technology Assessment},
editor = {Craig K. Abbey and Matthew A. Kupinski},
organization = {International Society for Optics and Photonics},
publisher = {SPIE},
pages = {978708},
year = {2016},
address = {San Diego, California, United States},
doi = {10.1117/12.2217846}
}

@article{Luenberger1972,
  title = {The Gradient Projection Method Along Geodesics},
  volume = {18},
  ISSN = {1526-5501},
  url = {http://dx.doi.org/10.1287/mnsc.18.11.620},
  DOI = {10.1287/mnsc.18.11.620},
  number = {11},
  journal = {Management Science},
  publisher = {Institute for Operations Research and the Management Sciences (INFORMS)},
  author = {Luenberger,  David G.},
  year = {1972},
  month = {Jul},
  pages = {620–631}
}

@BOOK{Luenberger1973-kb,
  title     = "Linear and nonlinear programming",
  author    = "Luenberger, David G and Ye, Yinyu",
  publisher = "Springer Nature",
  series    = {International series in operations research \& management science},
  edition   =  5,
  month     =  nov,
  year      =  2021,
  address   = "Cham, Switzerland",
  language  = "en"
}

@article{Fan2013,
  title = "Statistical analysis of big data on pharmacogenomics",
  volume = {65},
  ISSN = {0169-409X},
  url = {http://dx.doi.org/10.1016/j.addr.2013.04.008},
  DOI = {10.1016/j.addr.2013.04.008},
  number = {7},
  journal = {Advanced Drug Delivery Reviews},
  publisher = {Elsevier BV},
  author = {Fan,  Jianqing and Liu,  Han},
  year = {2013},
  month = jun,
  pages = {987–1000}
}

@misc{fan2015overviewestimationlargecovariance,
      title={An Overview on the Estimation of Large Covariance and Precision Matrices}, 
      author={Jianqing Fan and Yuan Liao and Han Liu},
      year={2015},
      eprint={1504.02995},
      archivePrefix={arXiv},
      primaryClass={stat.ME},
      url={https://arxiv.org/abs/1504.02995}, 
}

@inproceedings{vernimmen2024convergence,
  title={Convergence analysis of an inexact gradient method on smooth convex functions},
  author={Vernimmen, Pierre and Glineur, Fran{\c{c}}ois},
  booktitle={ESANN 2024, European Symposium on Artificial Neural Networks, Computational Intellignece and Machine Learning},
  year={2024}
}

@article{khanh2023inexact,
  title={Inexact reduced gradient methods in nonconvex optimization},
  author={Khanh, Pham Duy and Mordukhovich, Boris S and Tran, Dat Ba},
  journal={Journal of Optimization Theory and Applications},
  pages={1--41},
  year={2023},
  publisher={Springer}
}

@misc{zhou2024inexactriemanniangradientdescent,
      title={Inexact Riemannian Gradient Descent Method for Nonconvex Optimization}, 
      author={Juan Zhou and Kangkang Deng and Hongxia Wang and Zheng Peng},
      year={2024},
      eprint={2409.11181},
      archivePrefix={arXiv},
      primaryClass={math.OC},
      url={https://arxiv.org/abs/2409.11181}, 
}

@misc{manoptManopt,
	author = {},
	title = {{M}anopt --- manopt.org},
	howpublished = {\url{https://www.manopt.org/}},
	year = {},
}

@ARTICLE{fukunagakoontz,
  author={Fukunaga, K. and Koontz, W.L.G.},
  journal={IEEE Transactions on Computers}, 
  title="{Application of the Karhunen-Loève Expansion to Feature Selection and Ordering}", 
  year={1970},
  volume={C-19},
  number={4},
  pages={311-318},
  keywords={Clustering, feature extraction, feature selection, Karhunen-Loeve expansion, pattern recognition, unsupervised learning.},
  doi={10.1109/T-C.1970.222918}}

@misc{puchkin2024sharperdimensionfreeboundsfrobenius,
      title={Sharper dimension-free bounds on the Frobenius distance between sample covariance and its expectation}, 
      author={Nikita Puchkin and Fedor Noskov and Vladimir Spokoiny},
      year={2024},
      eprint={2308.14739},
      archivePrefix={arXiv},
      primaryClass={math.PR},
      url={https://arxiv.org/abs/2308.14739}, 
}

@misc{vershynin2011introductionnonasymptoticanalysisrandom,
      title={Introduction to the non-asymptotic analysis of random matrices}, 
      author={Roman Vershynin},
      year={2011},
      eprint={1011.3027},
      archivePrefix={arXiv},
      primaryClass={math.PR},
      url={https://arxiv.org/abs/1011.3027}, 
}

@article{carter1991,
author = {Carter, Richard G.},
title = {{On the Global Convergence of Trust Region Algorithms Using Inexact Gradient Information}},
journal = {SIAM Journal on Numerical Analysis},
volume = {28},
number = {1},
pages = {251-265},
year = {1991},
doi = {10.1137/0728014},

URL = {https://doi.org/10.1137/0728014},
eprint = {https://doi.org/10.1137/0728014}
,
    abstract = {Trust region algorithms are an important class of methods that can be used to solve unconstrained optimization problems. Strong global convergence results are demonstrated for a class of methods where the gradient values are approximated rather than computed exactly, provided they obey a simple relative error condition. No requirement is made that gradients be recomputed to successively greater accuracy after unsuccessful iterations. }
}

@misc{khanh2024newinexactgradientdescent,
      title={A New Inexact Gradient Descent Method with Applications to Nonsmooth Convex Optimization}, 
      author={Pham Duy Khanh and Boris S. Mordukhovich and Dat Ba Tran},
      year={2024},
      eprint={2303.08785},
      archivePrefix={arXiv},
      primaryClass={math.OC},
      url={https://arxiv.org/abs/2303.08785}, 
}

@misc{zhang2016firstordermethodsgeodesicallyconvex,
      title={First-order Methods for Geodesically Convex Optimization}, 
      author={Hongyi Zhang and Suvrit Sra},
      year={2016},
      eprint={1602.06053},
      archivePrefix={arXiv},
      primaryClass={math.OC},
      url={https://arxiv.org/abs/1602.06053}, 
}

@article{weber2023riemannian,
  title     = {{Riemannian optimization via Frank--Wolfe methods}},
  author    = {Weber, Melanie and Sra, Suvrit},
  journal   = {Mathematical Programming},
  volume    = {199},
  number    = {1},
  pages     = {525--556},
  year      = {2023},
  publisher = {Springer},
  doi       = {10.1007/s10107-022-01840-5},
  url       = {https://doi.org/10.1007/s10107-022-01840-5}
}

\end{document}